\newtheorem{theorem}{Theorem}[section]
\newtheorem{prop}[theorem]{Proposition}
\newtheorem{lemma}[theorem]{Lemma}
\newtheorem{cor}[theorem]{Corollary}
\theoremstyle{definition}
\newtheorem{defi}[theorem]{Definition}
\newtheorem{rem}[theorem]{Remark}
\newcommand{\ihsk}{$\mathrm{IHS}-K3^{[2]}$ } 
\newcommand{\ihskpt}{$\mathrm{IHS}-K3^{[2]}$. } 
\newcommand{\ihsknpt}{$\mathrm{IHS}-K3^{[n]}$. } 
\newcommand{\ihskcom}{$\mathrm{IHS}-K3^{[2]}$, } 
\newcommand{\ihskncom}{$\mathrm{IHS}-K3^{[n]}$, } 
\newcommand{\ihskn}{$\mathrm{IHS}-K3^{[n]}$ } 
\newcommand{\ihsknpts}{$\mathrm{IHS}-K3^{[n]}$: } 
\newcommand{\lra}{\longrightarrow}
\newcommand{\ie}{i.e. }
\newcommand{\ra}{\rightarrow}
\newcommand{\IP}{\mathbb{P}}
\newcommand{\IC}{\mathbb{C}}
\newcommand{\IR}{\mathbb{R}}
\newcommand{\IZ}{\mathbb{Z}}
\newcommand{\IQ}{\mathbb{Q}}
\newcommand{\IN}{\mathbb{N}}
\newcommand{\cC}{\mathcal{C}}
\newcommand{\cK}{\mathcal{K}}
\newcommand{\cP}{\mathcal{P}}
\newcommand{\cM}{\mathcal{M}}
\newcommand{\coloneqq}{:=}
\DeclareMathOperator{\rank}{\rm{rank}}
\DeclareMathOperator{\rk}{\rm{rank}}
\DeclareMathOperator{\Aut}{\rm Aut}
\DeclareMathOperator{\Bir}{\rm Bir}
\DeclareMathOperator{\sign}{\rm sign}
\DeclareMathOperator{\Hom}{\rm Hom}
\DeclareMathOperator{\id}{\rm id}
\DeclareMathOperator{\NS}{\rm NS}
\DeclareMathOperator{\Trans}{\rm Transc}
\DeclareMathOperator{\GL}{\rm GL}
\DeclareMathOperator{\Mo}{\rm Mon}
\DeclareMathOperator{\discr}{\rm discr}
\begin{document}

\date{\today}
\title[Complex ball quotients]{Complex ball quotients from manifolds of $K3^{[n]}$-type}
\author{Samuel Boissi\`ere, Chiara Camere and Alessandra Sarti}

\address{Samuel Boissi\`ere, Universit\'e de Poitiers, 
Laboratoire de Math\'ematiques et Applications, 
 T\'el\'eport 2 
Boulevard Marie et Pierre Curie
 BP 30179,
86962 Futuroscope Chasseneuil Cedex, France}
\email{samuel.boissiere@math.univ-poitiers.fr}
\urladdr{http://www-math.sp2mi.univ-poitiers.fr/~sboissie/}

\address{Chiara Camere, Universit\`a  degli Studi di Milano,
Dipartimento di Matematica,
Via Cesare Saldini 50,
20133 Milano, Italy} 
\email{chiara.camere@unimi.it}
\urladdr{http://www.mat.unimi.it/users/camere/en/index.html}

\address{Alessandra Sarti, Universit\'e de Poitiers, 
Laboratoire de Math\'ematiques et Applications, 
 T\'el\'eport 2 
Boulevard Marie et Pierre Curie
 BP 30179,
86962 Futuroscope Chasseneuil Cedex, France}
\email{sarti@math.univ-poitiers.fr}
\urladdr{http://www-math.sp2mi.univ-poitiers.fr/~sarti/}

\subjclass[2010]{Primary 14J50 ; Secondary 14J10, 32G13}

\keywords{Irreducible holomorphic symplectic manifold, non-symplectic automorphism, ball quotient, manifold of $K3^{[n]}$-type}

\begin{abstract}
We describe periods of irreducible holomorphic manifolds of $K3^{[n]}$-type with a non-symplectic automorphism of prime order $p\geq 3$. These turn out to lie on complex ball quotients and we are able to give a precise characterization of when the period map is bijective, by introducing the notion of $K(T)$-generality.
\end{abstract}

\maketitle

\section{Introduction}

We classified in \cite{BCS} all non-symplectic automorphisms of prime order $p$ acting on \ihskcom i.e. fourfolds which are deformation equivalent to the Hilbert scheme of two points of a smooth $K3$ surface, for $p=2,3$ and $7\leq p\leq 19$. Our classification relates certain invariants of the fixed locus with the isometry classes of two natural lattices, associated to the action of the automorphism on the second integral cohomology group. Then, in \cite{BCMS} and in \cite{kevin} the cases $p=23$ and $p=5$ were solved, thus completing the classification for \ihsk and prime order.

Here, we want to parametrize IHS manifolds which admit an action of a given non-symplectic automorphism of prime order $p$. For this we use its action on the second cohomology: given $\sigma$ acting on $X$, $\sigma^*$ acts on $H^2(X,\IZ)$ as a monodromy operator which is a Hodge isometry and preserves a K\"ahler class.

If the only automorphism acting trivially on cohomology is the identity (satisfied for $K3^{[n]}$-type by \cite[Proposition 10]{BeauvilleKaehler}), then the monodromy $\sigma^*$ reconstructs $\sigma$ by the Hodge-theoretic Torelli Theorem \ref{HTT}. In particular, given a Hodge monodromy that preserves a K\"ahler class, it lifts to exactly one automorphism.

In the case of \ihskcom we know from \cite{BCS} that the action on cohomology is classified once given three numerical invariants $(p,m,a)$, or equivalently once given the invariant sublattice $T$ inside the second cohomology lattice $L$ (compare with \cite[Corollary 5.7]{BCS}). For the other deformation classes this is not known yet, even for \ihsknpts the invariant sublattice $T$ is a necessary information, but may not be sufficient to determine completely the action. That is why we need to fix the isometry in $O(L)$ representing the automorphism, and this leads us to the study of $(\rho,T)$-polarizations, as defined in \S \ref{definitions}.

The study of the moduli spaces of projective irreducible holomorphic symplectic manifolds (IHS for
short) was started by Gritsenko, Hulek and Sankaran in \cite{GKS}, where the authors consider polarized  IHS manifolds and they show that, for IHS manifolds that are deformations of the Hilbert scheme of $n$ points on a $K3$ surface (we say that these are $\mathrm{IHS}-K3^{[n]}$),  if the polarization has degree large enough, then the corresponding moduli space is algebraic, and in fact of general type. 

On the other hand, when the Picard rank of the considered projective family grows, the period map is a priori non-injective, because of the existence of non-isomorphic birational models in its fibres.
In a recent work, Amerik and Verbitsky \cite{AmerikVerbitsky} were able to give a precise description of the K\"ahler cone of an IHS manifold. Their results are fundamental to start the description of the moduli space of IHS manifolds with a non-symplectic automorphism and were first applied by Joumaah in \cite{Joumaah} to describe the moduli space of \ihskn with a non-symplectic involution. 

In this paper, we generalize to \ihskn the construction of Dolgachev and Kond\=o 
\cite{DK} of the moduli space of $K3$ surfaces with a non-symplectic automorphism of prime order $p\geq 3$. By using results of \cite{BCS}, we first construct in \S \ref{surjective-period} a surjective period map to the complement of a hyperplane arrangement inside a complex ball; these hyperplanes are the analogous for \ihskn manifolds of the hyperplanes determined by $(-2)$-curves in similar moduli problems for $K3$ surfaces, see \cite{DK}. Then, in \S \ref{injectivityperiod}, by using the notion of $K(T)$-generality,  we are able to exhibit a bijective period map. Finally, in \S \ref{arithmetic-quot} we obtain a quasi-projective variety parametrizing isomorphism classes of $K(T)$-general \ihsknpt

\subsection*{Acknowledgements}

The second named author was partially supported by the Research Network Program GDRE-GRIFGA. The authors would like to thank Prof. Bert van Geemen for an enlightening conversation and Prof. Shigeyuki Kond\=o for his suggestion about moduli spaces of cubic threefolds. 

\section{Preliminary notions}
\subsection{Lattices}

A {\it lattice} $L$ is a free $\IZ$-module equipped with a non-degenerate symmetric bilinear form
$\langle \cdot, \cdot\rangle$ with integer values.  Its {\it dual lattice} is 
$L^{\vee}\coloneqq\Hom_{\IZ}(L,\IZ)$. It can be also described as follows:
$$
L^{\vee}\cong\{x\in L\otimes \IQ~|~\langle x,v\rangle\in \IZ\quad \forall v\in L\}.
$$
Clearly $L$ is a sublattice of $ L^{\vee}$ of the same rank, so the \emph{discriminant group} ${A_L:=L^{\vee}/L}$ is a finite abelian group whose order is denoted $\discr(L)$ and called the {\it discriminant of $L$}. We denote by
$\ell(A_L)$ the \emph{length} of $A_L$, \ie the minimal number of generators of $A_L$.  Let $\{e_i\}_i$ be a basis of~$L$ and 
$M\coloneqq(\langle e_i,e_j\rangle)_{i,j}$ the Gram matrix, then one has $\discr(L)=|\det(M)|$.

A lattice $L$ is called \emph{even} if $\langle x,x\rangle\in 2\IZ$ for all $x\in L$.  In this case   
the bilinear form induces a  quadratic form $q_L: A_L\lra \IQ/2\IZ$. Denoting by $(s_{(+)},s_{(-)})$ the signature of
$L\otimes\IR$, the triple of invariants $(s_{(+)},s_{(-)},q_L)$ characterizes the \emph{genus} of the even lattice $L$ (see \cite[Chapter 15, \S 7]{conwaysloane}, 
\cite[Corollary 1.9.4]{Nikulinintegral}).

 A sublattice $M\subset L$ is called \emph{primitive} if $L/M$ is a free $\IZ$-module. Let $p$ be a prime number. A lattice $L$ is called $p$-\emph{elementary} if $A_L\cong\left(\frac{\IZ}{p\IZ}\right)^{\oplus a}$ for some
non negative integer $a$ (also called the \emph{length} $\ell(A_L)$ of $A$).  We write $\frac{\IZ}{p\IZ}(\alpha)$, $\alpha\in\IQ/2\IZ$ to denote that the quadratic form $q_L$ takes value $\alpha$ on the generator 
of the $\frac{\IZ}{p\IZ}$ component of the discriminant group.

We  denote by $U$ the unique even unimodular hyperbolic 
lattice of rank two and by $A_k, D_h, E_l$ the even, negative definite lattices associated to the Dynkin diagrams 
of the corresponding type ($k\geq 1$, $h\geq 4$, $l=6,7,8$). We denote by $L(t)$ the lattice
whose bilinear form is the one on $L$ multiplied by $t\in\IN^\ast$. 

In the sequel we will be using the lattice $E_6^\vee(3)$ (see \cite{AST}):
it is even, negative definite and $3$-elementary with $a=5$. To get a simple form of its discriminant group one can proceed as follows. By~\cite[Table 2]{AS} the lattice $U(3)\oplus E_6^\vee(3)$ admits a primitive
embedding in the unimodular $K3$ lattice with orthogonal complement isometric to $U\oplus U(3)\oplus A_2^{\oplus 5}$. It follows that the discriminant form of $E_6^\vee(3)$ is the opposite of those of $A_2^{\oplus 5}$, so it is $\IZ/3\IZ(2/3)^{\oplus 5}$.

\subsection{IHS manifolds and their moduli spaces}

A compact complex K\"{a}hler manifold $ X $ is {\it irreducible holomorphic symplectic} (IHS) if it is simply connected and admits a holomorphic  $2$-form $ \omega_X \in H^{2,0}(X) $ everywhere non degenerate and unique up to multiplication by a non-zero scalar.
The existence of such a symplectic form $\omega_X$ immediately implies that the dimension of $X $ is an even integer. Moreover, the canonical divisor $ K_X$ is trivial, $c_1(X)=0$, and 
 $ T_X \cong \Omega _X^1 $. For a complete survey of this topic we refer the reader to the book \cite{GrossJoyceHuy} and references therein.  The second cohomology group $H^2(X,\IZ)$ is an integral lattice for the Beauville--Bogomolov--Fujiki quadratic form see \cite{Beauvillec1Nul}.

One of the most studied deformation families is that of $ X=S^{\left[n\right]} $, with $n\geq 2$, the Hilbert scheme of $0$-dimensional subschemes of length $n$ of  a smooth $K3$ surface $ S $.
The lattice $(H^2(X,\mathbb{Z}),q)$ in this case is $L=U^{\oplus 3}\oplus E_8^{\oplus 2}\oplus \langle -2(n-1)\rangle$;
we say that an IHS manifold $X$ is an \ihskn if it is deformation equivalent to the Hilbert scheme of $n$ points on a $K3$ surface. 

We recall some well known facts from \cite{Huybrechts} and \cite{MarkmanTorelli}. If $X$ is an IHS manifold, a {\it marking} for $X$ is 
an isometry $\eta: L \lra H^2(X,\IZ)$; the manifold $X$ is sometimes said to be {\it of type $L$}. An isomorphism $f:X_1\lra X_2$ is an isomorphism of marked pairs $(X_1,\eta_1)$ and $(X_2,\eta_2)$ if $\eta_1=f^*\circ \eta_2$. There exists a coarse moduli space $\mathcal{M}_{L}$ that parametrizes isomorphism classes of marked pairs of type $L$; it is a non-Hausdorff smooth complex manifold (see \cite{Huybrechts}). If $X$ is an \ihskn then  $\mathcal{M}_{L}$ has dimension $21$. Denote by 
$$
\Omega_L:=\{\omega\in\IP(L\otimes \IC)\, |\, q(\omega)=0,\, q(\omega+\bar{\omega})>0\}
$$
the {\it period domain}; it is an open (in the analytic topology) subset of the non-singular quadric defined by $q(\omega)=0$. The period map
$$
\mathcal{P}:\mathcal{M}_{L} \lra \Omega_L, (X,\eta)\mapsto \eta^{-1}(H^{2,0}(X))
$$ 
is a local isomorphism by the Local Torelli Theorem \cite[Th\'eor\`eme 5]{BeauvilleKaehler}. For $\omega\in \Omega_L$ we consider
$$
L^{1,1}(\omega):=\{\lambda\in L\,|\, (\lambda,\omega)=0\},
$$
where $(\cdot,\cdot)$ is the bilinear form associated to the quadratic form $q$. 
Then $L^{1,1}(\omega)$ is a sublattice of $L$,
and, given a marked pair $(X,\eta)$, we get  $\eta^{-1}(\NS(X))= L^{1,1}(P(X,\eta))$. The set
$\{\alpha\in H^{1,1}(X)\cap H^2(X,\IR)\,|\, q(\alpha) >0\}$  has two connected components; the {\it positive cone} $\mathcal{C}_X$ is the connected component containing the {\it K\"ahler cone $\mathcal{K}_X$}.

Recall that two points $x,y$ of a topological space $M$ are called {\it inseparable} if every pair of open neighbourhoods $x\in U$ and $y\in V$ has non-empty intersection; a point $x\in M$ is called a {\it Hausdorff point} if for every $y\in M$, $y\not= x$, then $x$ and $y$ are separable. 

\begin{theorem}[Global Torelli Theorem] \cite{Verbitsky},\cite[Theorem 2.2]{MarkmanTorelli} \label{GTT}

Let $\mathcal{M}^0_L$  be a connected component of $\mathcal{M}_L$.
\begin{enumerate}
\item The period map $\mathcal{P}$ restricts to a surjective holomorphic map $$\mathcal{P}:\mathcal{M}^0_L\lra \Omega_L.$$ (we call it again $\mathcal{P}$ for simplicity).
\item For each $\omega\in\Omega_L$, the fiber $\mathcal{P}^{-1}(\omega)$ consists of pairwise inseparable points.   
\item Let $(X_1,\eta_1)$ and $(X_2,\eta_2)$ be two inseparable points of $\mathcal{M}_L^0$. Then $X_1$ and~$X_2$ are bimeromorphic. 
\item The point $(X,\eta)\in\mathcal{M}_L^0$ is Hausdorff  if and only if $\mathcal{C}_X=\mathcal{K}_X$.
\end{enumerate}
\end{theorem}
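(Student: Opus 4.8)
The statement is the Global Torelli Theorem of Verbitsky (in the form refined by Huybrechts and Markman), a deep result whose complete proof is far beyond a sketch; I outline the architecture one would follow. For \textbf{(1)}, I would start from the Local Torelli Theorem, which gives that $\mathcal{P}$ is a local biholomorphism, so $\mathcal{P}(\mathcal{M}^0_L)$ is open in the connected domain $\Omega_L$. To upgrade openness to surjectivity I would use \emph{twistor lines}: a K\"ahler class $\kappa$ on a marked pair $(X,\eta)$ yields, via the Calabi--Yau theorem, a hyperk\"ahler metric and hence a family of complex structures over $\IP^1$, i.e.\ a rational curve $T_\kappa\subset\mathcal{M}^0_L$ whose image in $\Omega_L$ is the conic cut out by a positive-definite $3$-plane through $\eta(\kappa)$. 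A Bogomolov--Huybrechts connectivity argument --- any boundary point of the image is reached as a limit along such conics, which have uniformly bounded geometry --- then shows $\mathcal{P}(\mathcal{M}^0_L)$ is closed as well, hence all of $\Omega_L$.

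For \textbf{(2)}, I would pass to the \emph{Hausdorff reduction} $\widetilde{\mathcal{M}}^0_L$, the quotient of $\mathcal{M}^0_L$ by the inseparability relation, and factor the period map as $\widetilde{\mathcal{P}}:\widetilde{\mathcal{M}}^0_L\to\Omega_L$; the claim is then equivalent to $\widetilde{\mathcal{P}}$ being a biholomorphism. It is \'etale by Local Torelli, and the real content is that it is \emph{proper}: a sequence of marked pairs whose periods converge admits, after rotating along twistor lines within a fixed chamber of the positive cone, a subsequence converging in $\widetilde{\mathcal{M}}^0_L$ to a pair with the limiting period. A proper \'etale map onto $\Omega_L$ is a biholomorphism, each component being a simply connected symmetric domain, so each fibre $\mathcal{P}^{-1}(\omega)$ is a single point of $\widetilde{\mathcal{M}}^0_L$, \ie a family of pairwise inseparable marked pairs.

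For \textbf{(3)}, I would reproduce Huybrechts' cycle-theoretic argument. If $(X_1,\eta_1)$ and $(X_2,\eta_2)$ are inseparable, one realizes $X_1$ and $X_2$ as the two non-separated central fibres of families $\mathcal{X}_j\to\Delta$ over a disk that are canonically identified away from $0$; the resulting isomorphisms $f_t:\mathcal{X}_1|_t\xrightarrow{\sim}\mathcal{X}_2|_t$ ($t\neq0$) have graphs $\Gamma_{f_t}\subset\mathcal{X}_1|_t\times\mathcal{X}_2|_t$ carrying a fixed cohomology class, hence uniformly bounded volume for the Fujiki form. By Bishop's compactness theorem a subsequence converges, as $t\to0$, to an analytic cycle $Z\subset X_1\times X_2$, and a dimension/degree count forces one component of $Z$ to project birationally onto both factors, giving the sought bimeromorphic map.

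For \textbf{(4)}, if $\mathcal{K}_X\subsetneq\mathcal{C}_X$ there is a wall of the K\"ahler cone; a suitable small deformation of $X$ keeping that wall supports a flopping contraction, and performing the flop yields an IHS manifold $X'$ bimeromorphic but not isomorphic to $X$, giving a point of $\mathcal{M}^0_L$ inseparable from $(X,\eta)$ yet different from it, so $(X,\eta)$ is not Hausdorff. Conversely, if $\mathcal{C}_X=\mathcal{K}_X$, then by \textbf{(3)} any marked pair inseparable from $(X,\eta)$ is bimeromorphic to $X$, and a bimeromorphic map between IHS manifolds whose K\"ahler cones are the full positive cone has no curves to flop, hence extends to an isomorphism of marked pairs; thus $(X,\eta)$ is Hausdorff. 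The genuine obstacle throughout is the properness invoked in step \textbf{(2)} --- the control of degenerations of marked IHS manifolds modulo inseparability --- which is exactly where Verbitsky's deep ergodic/twistor analysis of Teichm\"uller space enters; the remaining ingredients (Local Torelli, covering-space theory, Bishop's theorem, the theory of flops of symplectic manifolds) are comparatively standard.
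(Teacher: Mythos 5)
This statement is quoted in the paper as an external black box --- it is the Verbitsky--Huybrechts--Markman Global Torelli Theorem, cited from \cite{Verbitsky} and \cite[Theorem 2.2]{MarkmanTorelli} --- and the paper contains no proof of it, so there is nothing internal to compare your argument against. Judged against the published proofs, your outline reproduces the correct architecture: Local Torelli plus twistor conics for surjectivity (this part is Huybrechts' theorem and predates Verbitsky); the Hausdorff reduction of the Teichm\"uller/moduli space and the fact that the induced map to $\Omega_L$ is a local homeomorphism which is moreover a covering of the simply connected domain $\Omega_L$ for item (2); Huybrechts' Bishop-compactness argument on graphs of isomorphisms for item (3); and the correspondence between points of a fibre and K\"ahler(-Weyl) chambers of the positive cone for item (4).

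That said, as a proof rather than a road map it is incomplete exactly where you say it is: the assertion in step (2) that the map from the Hausdorff reduction is \emph{proper} (equivalently, that it has the lifting property along twistor paths, which is how Verbitsky actually argues, and where the well-known gap in the first version of his paper occurred) is the entire content of the theorem and is only named, not established. Two smaller imprecisions: in (4), the forward direction does not need a ``small deformation supporting a flopping contraction'' --- one uses directly that every K\"ahler-Weyl chamber of $\mathcal{C}_X$ is the K\"ahler cone of a marked birational model lying in the same connected component and inseparable from $(X,\eta)$ (this is \cite[Proposition 5.14]{MarkmanTorelli}, which the paper itself invokes later), so more than one chamber forces more than one point in the fibre; and in the converse direction the clean argument is that the birational map from (3) gives a parallel transport Hodge isometry which, when $\mathcal{K}_X=\mathcal{C}_X$, necessarily maps a K\"ahler class to a K\"ahler class and is therefore induced by an isomorphism by the Hodge-theoretic Torelli Theorem, rather than an appeal to ``no curves to flop.''
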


In the sequel we will be using also the following Hodge theoretic version of the Torelli theorem.

\begin{theorem}\cite[Theorem 1.3]{MarkmanTorelli}\label{HTT}. 
 Let $X$ and $Y$ be two irreducible holomorphic symplectic manifolds deformation equivalent one to each other. Then:
 \begin{enumerate}
  \item $X$ and $Y$ are bimeromorphic if and only if there exists a parallel transport operator $f:H^2(X,\IZ)\ra H^2(Y,\IZ)$ that is an isomorphism of integral Hodge structures;
  \item if this is the case, there exists an isomorphim $\tilde{f}:X\ra Y$ inducing $f$ if and only if $f$ preserves a K\"ahler class.
 \end{enumerate}
\end{theorem}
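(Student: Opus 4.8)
The plan is to deduce both statements from the Global Torelli Theorem~\ref{GTT}, together with two inputs from the literature: that a bimeromorphic map between irreducible holomorphic symplectic manifolds is an isomorphism in codimension one, and that the isometry it then induces on $H^2$ is a parallel transport operator between deformation-equivalent manifolds (Huybrechts).

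First I would prove~(1). For the forward direction, given a bimeromorphic map $g\colon X\dashrightarrow Y$ I would use that the indeterminacy loci of $g$ and $g^{-1}$ have complex codimension at least two, so that $g$ restricts to an isomorphism between open subsets with complements of codimension $\geq 2$; restriction to these opens identifies $H^2(X,\IZ)$ with $H^2(Y,\IZ)$ and produces an isometry $f=g_*$ for the Beauville--Bogomolov--Fujiki forms, visibly a morphism of Hodge structures, which is a parallel transport operator by Huybrechts' deformation result. For the converse, I would fix a marking $\eta\colon L\to H^2(X,\IZ)$ with $(X,\eta)$ in a connected component $\mathcal{M}^0_L$, set $\eta'\coloneqq f\circ\eta$, and note that $(Y,\eta')\in\mathcal{M}^0_L$ because $f$ is a parallel transport operator, while $\mathcal{P}(X,\eta)=\mathcal{P}(Y,\eta')$ because $f$ is a Hodge isometry; then Theorem~\ref{GTT}(2) makes $(X,\eta)$ and $(Y,\eta')$ inseparable points, so by Theorem~\ref{GTT}(3) the manifolds $X$ and $Y$ are bimeromorphic.

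Then I would prove~(2). The ``only if'' direction is immediate, since an isomorphism $\tilde f\colon X\to Y$ inducing $f$ carries $\mathcal{K}_X$ onto $\mathcal{K}_Y$ and hence sends a K\"ahler class to a K\"ahler class. For the ``if'' direction I would keep the markings $\eta$ and $\eta'=f\circ\eta$ with common period $\omega$ and work inside the positive cone $\mathcal{C}\subset L^{1,1}(\omega)\otimes\IR$. The key structural fact to invoke is that the fibre of $\mathcal{P}$ over $\omega$ inside $\mathcal{M}^0_L$ is in bijection with the \emph{K\"ahler-type chambers} of $\mathcal{C}$, the connected components of the complement of the locally finite arrangement of hyperplanes orthogonal to the wall classes, the chamber attached to a marked pair $(Z,\eta_Z)$ being $\eta_Z^{-1}(\mathcal{K}_Z)$. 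Since distinct chambers have disjoint interiors, the hypothesis that some $\kappa\in\mathcal{K}_X$ has $f(\kappa)\in\mathcal{K}_Y$ gives $\eta^{-1}(\kappa)\in\eta^{-1}(\mathcal{K}_X)\cap\eta'^{-1}(\mathcal{K}_Y)$, hence $\eta^{-1}(\mathcal{K}_X)=\eta'^{-1}(\mathcal{K}_Y)$, hence $(X,\eta)=(Y,\eta')$ in $\mathcal{M}^0_L$; the resulting isomorphism of marked pairs is an isomorphism $X\to Y$ inducing $f$.

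The hard part is exactly the structural fact imported in the last step: the identification of the fibre of the period map with the set of K\"ahler-type chambers rests on the fine description of the K\"ahler cone of an IHS manifold due to Amerik and Verbitsky (on top of Verbitsky's Global Torelli Theorem). This is what upgrades bimeromorphicity to a genuine isomorphism and makes the ``preserves a K\"ahler class'' criterion in~(2) sharp; the remainder of the argument is bookkeeping with markings and periods.
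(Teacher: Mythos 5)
This statement is not proved in the paper at all: it is quoted verbatim from Markman's survey (\cite[Theorem 1.3]{MarkmanTorelli}) and used as a black box, so there is no in-paper argument to compare yours against. Judged on its own terms, your outline is a faithful reconstruction of the standard derivation of the Hodge-theoretic Torelli theorem from the Global Torelli Theorem \ref{GTT}. Part (1), forward direction: correct --- a bimeromorphic map between IHS manifolds is an isomorphism in codimension one, and the induced isometry on $H^2$ is a parallel transport operator and a Hodge isometry (this is \cite[Lemma 5.12]{MarkmanTorelli}, which the paper itself invokes in Remark \ref{utile}). Part (1), converse: correct --- setting $\eta'=f\circ\eta$ puts $(Y,\eta')$ in the same connected component $\mathcal{M}^0_L$ precisely because $f$ is a parallel transport operator, the periods agree because $f$ is a Hodge isometry, and items (2)--(3) of Theorem \ref{GTT} give bimeromorphy. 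Part (2), ``only if'': immediate, as you say.

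The one caveat is the ``if'' direction of (2). The structural fact you import --- that $\mathcal{P}^{-1}(\omega)\cap\mathcal{M}^0_L$ is in bijection with the K\"ahler-type chambers via $(Z,\eta_Z)\mapsto\eta_Z^{-1}(\mathcal{K}_Z)$ --- is \cite[Theorem 5.16]{MarkmanTorelli}, and statement (2) is a one-line consequence of it; so what you have is a correct reduction to an essentially equivalent result rather than an independent proof. You flag this honestly, and since the paper treats the whole theorem as imported this is acceptable, but be aware of the near-circularity. Two smaller points: (i) for the ``distinct chambers are disjoint'' step you also need that $f$ carries the wall arrangement of $X$ to that of $Y$, i.e.\ the deformation-invariance of MBM classes (\cite[Theorem 2.16]{AmerikVerbitskyMK}); (ii) attributing the chamber description to Amerik--Verbitsky is anachronistic --- Markman's proof uses his monodromy-defined K\"ahler-type chambers, which predate the MBM formalism --- though this does not affect correctness.
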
 
Recall that a  parallel transport operator $f:H^*(X,\IZ)\lra H^*(Y,\IZ)$ is called a {\it monodromy operator}. If $\Mo (X)\subset \GL(H^*(X,\IZ))$ denotes the subgroup of monodromy operators, we denote by $\Mo^2(X)$ its image in $O(H^2(X,\IZ))$. So we have

\begin{defi}
 Given a marked pair $(X,\eta)$ of type $L$, we define the \textit{monodromy group} as $\Mo^2(L):=\{\eta^{-1}\circ f\circ\eta\,|\, f\in \Mo^2(X)\}\subset \GL(L)$. 
\end{defi}

A priori, the definition of $\Mo^2(L)$ depends on the choice of $(X,\eta)$, but it is easy to show that it is well-defined on the connected component $\mathcal{M}^0_L$. It was proven by Verbitsky in \cite{VerTorelli} that $\Mo^2(L)$ is an arithmetic subgroup of $O(L)$.

By a result of Markman \cite[Theorem 1.2]{Markmanintegral}, if $X$ is \ihskncom then $\Mo^2(X)$ is a normal subgroup of $O(H^2(X,\IZ))$. In particular, if $n=2$ then
$\Mo^2(X)=O^{+}(H^2(X,\IZ))$, which are the isometries of $H^2(X,\IZ)$ that preserve the positive cone, so in this case Theorem \ref{HTT} can be restated in the following way (which is essentially the same statement as for $K3$ surfaces):\\
 Let $X$ be an \ihskpt Then:
 \begin{enumerate}
  \item Let  $h\in O^+(H^2(X,\IZ))$ be an isomorphism of integral Hodge structures, then there exists 
$f\in \Bir(X)$, the group of birational transformations of $X$, such that $f^*=h$;
  \item Let $h\in O^+(H^2(X,\IZ))$ be an isomorphism of integral Hodge structures. There exists $f\in\Aut(X)$ such that $f^*=h$  if and only if $h$ preserves a K\"ahler class.
 \end{enumerate}


\section{Non-symplectic automorphisms of IHS manifolds and $(\rho, T)$-polarizations}

We briefly review here what is known for non-symplectic automorphisms of IHS manifolds. Let $X$ be an IHS manifold and $f$ be a holomorphic automorphism of $X$ of prime order $p$ acting non-symplectically: $f^*$ acts on $H^{2,0}(X)$ by multiplication by a primitive $p$-th root of the unity. Such automorphisms can exist only when $X$ is projective. 
It follows that the invariant lattice $T\subset H^2(X,\IZ)$  is a primitive sublattice of the N\'eron--Severi group $\NS(X)$,
and consequently the characteristic polynomial of the action of $f$ on the transcendental lattice $\Trans(X)$ is the $k$-th power of the $p$-th cyclotomic polynomial $\Phi_p$. Thus $k\varphi(p)=k(p-1)=\rank_\IZ\Trans(X)$, and in particular 
$$
\varphi(p)\leq b_2(X)-\rho(X),
$$
where $\varphi$ is the Euler's totient function and $\rho(X)=\rank_\IZ\NS(X)$ is the Picard number of $X$. 
If $X$ is \ihskncom since $b_2(X)=23$, the maximal prime order for $f$ is $p=23$, and this can happen only when $\rho(X)=1$.

Observe that a very general projective \ihsk has no non trivial automorphisms. Here, by {\it very general} we mean that the manifold has Picard number one and it is not a special member in the moduli space. We believe that this fact is well known, but since we could not find an explicit proof in the literature, we give here a proof that uses our previous results:
\begin{theorem}
Let $X$ be a very general projective \ihsk, $\NS(X)\not=\langle 2\rangle$, then $\Aut(X)=\id$. 
\end{theorem}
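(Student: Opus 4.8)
The plan is to argue by contradiction, reducing a hypothetical non‑trivial automorphism to one of prime order $p$ and eliminating each possibility. First I would fix the data. Since $X$ is very general, $\rho(X)=1$ and $\NS(X)$ is a rank one positive definite even lattice, so $\NS(X)\cong\langle 2d\rangle$ for some $d\geq 1$, and the hypothesis is that $d\neq 1$. Every automorphism of $X$ acts on $\NS(X)$ preserving the ample ray, hence through $O(\langle 2d\rangle)=\{\pm\id\}$ it must act as the identity. Moreover $\Aut(X)$ embeds in $O(H^2(X,\IZ))$ by \cite[Proposition 10]{BeauvilleKaehler} and preserves the polarised Hodge structure on the transcendental lattice $\Trans(X)$, whose group of Hodge isometries is finite; so $\Aut(X)$ is finite and we may choose $\id\neq f\in\Aut(X)$ of prime order $p$. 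By the above, $f^{*}$ restricts to the identity on $\NS(X)$.

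Suppose first that $f$ is symplectic, so that $f^{*}$ fixes $\omega_X$ and $\bar\omega_X$ as well. Then the coinvariant lattice $S_f\coloneqq(H^2(X,\IZ)^{f^{*}})^{\perp}$ is orthogonal to $\omega_X$ and $\bar\omega_X$, hence $S_f\subseteq H^{1,1}(X)\cap H^2(X,\IZ)=\NS(X)$; but $\NS(X)$ is fixed pointwise by $f^{*}$, so $\NS(X)\subseteq S_f^{\perp}$, and therefore every vector of $S_f\subseteq\NS(X)\cap S_f^{\perp}$ is isotropic. As $\NS(X)$ is positive definite this forces $S_f=0$, so $f^{*}=\id$ on $H^2(X,\IZ)$ and $f=\id$ by \cite[Proposition 10]{BeauvilleKaehler}, a contradiction. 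Hence $f$ is non‑symplectic.

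Now, as recalled in this section, the invariant lattice $T\coloneqq H^2(X,\IZ)^{f^{*}}$ is a primitive sublattice of $\NS(X)$, and together with $f^{*}|_{\NS(X)}=\id$ this forces $T=\NS(X)\cong\langle 2d\rangle$. The characteristic polynomial of $f^{*}$ on $\Trans(X)=T^{\perp}$, which has rank $b_2(X)-\rho(X)=22$, equals $\Phi_p^{\,k}$ with $k\varphi(p)=22$, so $p\in\{2,3,23\}$. For $p=3$ and $p=23$ I would observe that $f^{*}$ restricts to a Hodge isometry of $\Trans(X)$ of order $p$ (it is non‑trivial, since $f^{*}\omega_X=\zeta\omega_X$ with $\zeta$ a primitive $p$‑th root of unity), which is impossible for a very general $X$, whose transcendental Hodge structure admits only $\pm\id_{\Trans(X)}$ as Hodge isometries. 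For $p=2$ this argument breaks down because $-\id_{\Trans(X)}$ is itself a Hodge isometry, so here I would argue with lattices: the characteristic polynomial $(t+1)^{22}$ forces $f^{*}|_{\Trans(X)}=-\id$, hence $f^{*}$ is the (unique) extension to $H^2(X,\IZ)$ of the isometry $\id_{T}\oplus(-\id_{\Trans(X)})$ of $T\oplus\Trans(X)$. Such an extension exists if and only if $-\id$ acts trivially on the finite glue group $H^2(X,\IZ)/(T\oplus\Trans(X))$, that is, if and only if this group is $2$‑torsion, and computing the divisibility in $L=U^{\oplus 3}\oplus E_8^{\oplus 2}\oplus\langle -2\rangle$ of a primitive vector of square $2d$ shows this holds only for $d=1$. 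Since $d\neq 1$ this is a contradiction, and we conclude $\Aut(X)=\{\id\}$.

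The main obstacle is this last divisibility computation for $p=2$: one must show that a primitive $v\in L$ with $v^{2}=2d$ has divisibility $\geq d$ (equivalently, that the writing $v=t\eta$ with $\eta\in L^{\vee}$, $\eta^{2}=2d/t^{2}$ and $q_L(\eta)\equiv 2d/t^{2}\pmod{2\IZ}$ on $A_L\cong\IZ/2\IZ$ is compatible with $t\in\{d,2d\}$) only when $d=1$. Incidentally, for $d=1$ the isometry $\id_{T}\oplus(-\id_{\Trans(X)})$ does extend, lies in $O^{+}(H^2(X,\IZ))$ and fixes the ample class, so by the $K3^{[2]}$‑form of the Torelli theorem \emph{every} $X$ with $\NS(X)=\langle 2\rangle$ carries a non‑symplectic involution — this explains the hypothesis. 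Secondary ingredients, used at the start, are the finiteness of $\Aut(X)$ when $\rho(X)=1$ and the absence of extra Hodge symmetry on $\Trans(X)$ for a very general member of the family; both are standard. Alternatively, for $p\in\{2,3,23\}$ one may appeal directly to the classification in \cite{BCS} and \cite{BCMS} in place of the case analysis above.
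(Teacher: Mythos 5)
Your proof is correct and reaches the same conclusion by a genuinely different route in the decisive steps. The paper, after the same preliminary reductions (finiteness of $\Aut(X)$, reduction to prime order, and the constraint $p-1\mid 22$ giving $p\in\{2,3,23\}$), excludes the symplectic case by citing the bound $\rk\NS(X)\geq 8$ from Mongardi's thesis and disposes of $p=23$, $p=3$ and $p=2$ by appealing to the classification results of \cite{BCS} and \cite{BCMS} (the automorphism forces $\NS(X)$ to be $\langle 46\rangle$, $\langle 6\rangle$ or $\langle 2\rangle$ respectively, and in the first two cases only special members of those polarized families carry it). You instead argue self-containedly: the coinvariant-lattice argument for the symplectic case ($S_f\subseteq\NS(X)\cap S_f^{\perp}=0$) replaces the citation and is clean; for $p=3,23$ you invoke that a very general member has no Hodge isometries of $\Trans(X)$ other than $\pm\id$ (equivalently $\mathrm{End}_{\mathrm{Hdg}}(\Trans(X)_\IQ)=\IQ$), which is the same genericity mechanism the paper hides inside ``only a very special member''; and for $p=2$ your glue-group computation — the extension of $\id_T\oplus(-\id_{\Trans(X)})$ to $L$ exists iff $L/(T\oplus\Trans(X))$, cyclic of order $2d/\mathrm{div}(v)$ with $\mathrm{div}(v)\in\{1,2\}$ and $\mathrm{div}(v)=2$ only for $d\equiv 3\pmod 4$, is $2$-torsion, which forces $d=1$ — is correct and in fact proves slightly more than the paper states, namely that no such involution exists on \emph{any} projective $X$ with $\rho(X)=1$ and $\NS(X)\neq\langle 2\rangle$, very general or not. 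The one point you should make fully precise is the ``standard'' fact about Hodge isometries of $\Trans(X)$ for a very general member: it requires interpreting ``very general'' as lying outside a countable union of proper closed subsets so that the Mumford--Tate group is generic; this matches the paper's definition and is no less rigorous than the paper's own citation-based treatment.
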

\begin{proof}
A generic projective IHS manifold $X$ has N\'eron--Severi group of rank $1$ equal to $\langle 2t \rangle$, and so $\rk \Trans(X)=22$. Since an automorphism of $X$ induces a Hodge isometry on $H^2(X,\IZ)$, it preserves $\NS(X)$ and so it preserves an ample class. This means that every element of $\Aut(X)$ preserves a K\"ahler metric, hence it is an isometry.  In conclusion we have that $\Aut(X)$ is a discrete Lie subgroup of a compact group, so it is finite. We are now left to study automorphisms of finite order on $X$, and it is easy to show that we can restrict to the case of prime order.

Recall that if $X$ admits a symplectic automorphism then $\rk \NS(X)\geq 8$, see \cite[Section 6.2]{mongardiPhD}, so we do not have such automorphisms. If $\sigma$ is non-symplectic of prime order $p$, then $p-1$ must divide $22$. So we have the possibilities $p=2, 3, 23$. For $p=23$ then $\NS(X)=\langle 46 \rangle$, and only a very special \ihsk carries an order $23$ non-symplectic automorphism as shown in \cite{BCMS}. For $p=3$ the only possibility is $\NS(X)=\langle 6 \rangle$, and for these \ihsk we do not always have a non-symplectic automorphism, see \cite{BCS}. If $p=2$, then $\NS(X)= \langle 2 \rangle$ by \cite{BCS}, and this case corresponds to an \ihsk with an involution that deforms to Beauville's involution on the Hilbert scheme of two points of a quartic in $\IP^3$ not containing a line. 
\end{proof}

\subsection{$(\rho,T)$-polarized marked pairs}\label{definitions}

Let now $T$ be an even non-degenerate lattice of rank  $r\geq 1$ and signature $(1,r-1)$.
A \emph{$T$-polarized} IHS manifold is a pair $(X,\iota)$,  where $X$ is a projective IHS manifold and $\iota$ is a primitive embedding of lattices $\iota:T\hookrightarrow \NS(X)$ (see also \cite{C4}). Observe that we are then assuming that $T$ has a primitive embedding in $L$,
and we identify $T$ with its image as sublattice of $L$.

Let $(\bar{X},\bar{\iota})$ be a $T$-polarized IHS manifold such that there exists a cyclic group $G=\langle \bar{\sigma}\rangle\subset \Aut(\bar{X})$ of prime order $p\geq 3$ acting non-symplectically on $\bar{X}$. Assume that the action of $G$ on $\bar{\iota}(T)$ is the identity and that there exists a group homomorphism $\rho: G\longrightarrow O(L)$ such that
$$
T=L^{\rho}:=\{x\in L\,|\, \rho(\bar \sigma)(x)=x\}.
$$

\begin{defi}
A {\it $(\rho, T)$-polarization} of a $T$-polarized $(X,\iota)$ is a marking $\eta\colon L\to H^2(X,\IZ)$ such that $\eta_{|T}=\iota$ and such that there exists $\sigma\in\Aut(X)$ satisfying $\sigma^\ast=\eta\circ\rho({\bar\sigma})\circ\eta^{-1}$.

The pair $(X,\eta)$ is said to be {\it $(\rho, T)$-polarized} (in order to keep a light notation, we forget about $\iota$, though it is part of the data). 
\end{defi}

\begin{rem}\label{rho-monodromy}
 It follows immediately from the definition and from the Hodge-theoretic Torelli Theorem \ref{HTT} that a necessary condition for the existence of $(\rho,T)$-polarized marked pairs is that $\rho(\bar{\sigma})\in \Mo^2(L)$. 
\end{rem}

Two $(\rho, T)$-polarized marked IHS manifolds $(X_1, \eta_1)$ and $(X_2,\eta_2)$ are isomorphic if there is an isomorphism $f\colon X_1\to X_2$ such that $\eta_1=f^*\circ \eta_2$.

Let $\omega$ be the line in $L\otimes \IC$ defined by $\omega=\eta^{-1}(H^{2,0}(X))$ and let  $\xi\in\IC^*$ be such that $\rho(\bar{\sigma})(\omega)=\xi \omega$. Observe that $\xi\not= 1$, since the action is non-symplectic, and $\xi$ is a primitive $p$-th root of unity not equal to $-1$ since $p$ is a prime number $p\geq 3$. The period $\omega$ belongs to the eigenspace $S(\xi)$ of $S\otimes \IC$ relative to the eigenvalue $\xi$, where $S$ is the orthogonal complement of 
$T$ in $L$.  
 
Then the period belongs to the space
$$
\Omega_T^{\rho,\xi}:=\{x\in \IP(S(\xi))\,|\, q(x+\bar{x})>0\}
$$ 
of dimension  $\dim S(\xi)-1$, which is a complex ball if $\dim S(\xi)\geq 2$. It is easy to check that every point $x\in \Omega_T^{\rho,\xi}$  satisfies automatically the condition $q(x)=0$. 

We recall now some of the results of \cite{BCS} which contains the classification of non--symplectic automorphisms of prime order $3\leq p\leq 19$, $p\not=5$ and partial results on involutions, completing the ones in \cite{BeauvilleInv}. The cases $p=5$ and $p=23$ were then discussed respectively in \cite{kevin} and in \cite{BCMS}. Such automorphisms are classified in terms of their invariant sublattice $T$ (see \cite[Appendix A]{BCS} and \cite[Section 3.4]{kevin}). For higher dimensional \ihskn a classification is not known yet, there are only partial results due to \cite{Joumaah} in the case of involutions.

Moduli spaces of \ihskn endowed with a non-symplectic involution are studied in \cite{Joumaah}; in the case of \ihsk, for odd primes, some partial results were contained in \cite{BCS}.
There the authors deal with the case $T=\overline{T}\oplus \langle -2\rangle$, where $\overline{T}$ is an even non degenerate lattice of signature $(1,21-(p-1) m)$ with fixed primitive embedding in the $K3$ lattice  $\Lambda$, and $S=T^{\perp}\cap L\subset \Lambda$. 
 
\begin{theorem}{\cite[Theorem 5.5]{BCS}}\label{ball}
Let $X$ be a $(\rho, T)$-polarized \ihsk such that $H^{2,0}(X)$ is contained in the eigenspace of $H^2(X,\IC)$ relative to $\xi$ (with $T$ with a decomposition as above). 
 Then $\omega_X\in \Omega_T^{\rho,\xi}$ and conversely, if  $\dim S(\xi)\geq 2$ every point of $\Omega_T^{\rho,\xi}\setminus\bigcup_{\delta\in S, q(\delta)=-2} \left(H_{\delta}\cap \IP(S(\xi))\right)$ is the period point of some $(\rho, T)$-polarized \ihsk (where $H_\delta$ is the hyperplane in $\IP(S(\xi))$ orthogonal to $\delta$).
\end{theorem}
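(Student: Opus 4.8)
The plan is to combine the Global Torelli Theorem \ref{GTT} with the known classification of the invariant and coinvariant lattices from \cite{BCS}, following the blueprint of Dolgachev--Kond\=o for $K3$ surfaces. The forward implication is essentially the content of the discussion preceding the statement: if $(X,\eta)$ is $(\rho,T)$-polarized and $H^{2,0}(X)$ lies in the $\xi$-eigenspace, then $\omega_X=\eta^{-1}(H^{2,0}(X))$ satisfies $q(\omega_X)=0$, $q(\omega_X+\bar\omega_X)>0$, is orthogonal to $T=L^\rho$, and is an eigenvector of $\rho(\bar\sigma)$ for the eigenvalue $\xi$; hence $\omega_X\in\Omega_T^{\rho,\xi}$. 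I would just recall this and note that $\delta\in S$ with $q(\delta)=-2$ would force $\delta\in\NS(X)$ to be a class of square $-2$ orthogonal to the period, which contradicts the polarization being ample (or more precisely contradicts $X$ being a smooth IHS manifold of $K3^{[2]}$-type admitting the automorphism — one uses that a $(-2)$-class gives a wall and the period must avoid it).

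For the converse, fix $x\in\Omega_T^{\rho,\xi}$ avoiding all the hyperplanes $H_\delta$, $\delta\in S$, $q(\delta)=-2$. First I would build a point of $\Omega_L$: since $S(\xi)\subset S\otimes\IC\subset L\otimes\IC$, the line $x$ is a period point in $\Omega_L$, so by Theorem \ref{GTT}(1) there is a marked pair $(X,\eta)\in\mathcal{M}_L^0$ with $\mathcal{P}(X,\eta)=x$. By construction $\eta^{-1}(\NS(X))=L^{1,1}(x)\supset T$, and in fact, because $x$ avoids all the $(-2)$-hyperplanes in $S$ and generates a $\rho$-eigenspace, one computes that $L^{1,1}(x)\cap S$ contains no $(-2)$-classes; one should check that the positive cone $\mathcal{C}_X$ then coincides with $\mathcal{K}_X$, so that $(X,\eta)$ is a Hausdorff point by Theorem \ref{GTT}(4). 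Next, the isometry $g:=\rho(\bar\sigma)\in\Mo^2(L)$ fixes $x$ up to the scalar $\xi$, hence $\eta\circ g\circ\eta^{-1}\in\Mo^2(X)=O^+(H^2(X,\IZ))$ is a Hodge isometry; by the Hodge-theoretic Torelli Theorem \ref{HTT} (in the restated form for $K3^{[2]}$-type) it is induced by a birational automorphism, and since $X$ has $\mathcal{C}_X=\mathcal{K}_X$ this birational automorphism is biregular and preserves a K\"ahler class — so we get $\sigma\in\Aut(X)$ with $\sigma^*=\eta\circ\rho(\bar\sigma)\circ\eta^{-1}$. Finally $\eta_{|T}=\iota$ makes $T\hookrightarrow\NS(X)$ a primitive embedding fixed by $\sigma^*$, so $(X,\eta)$ is $(\rho,T)$-polarized with the required eigenvalue condition, and $\mathcal{P}(X,\eta)=x$.

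The step I expect to be the main obstacle is showing that the chosen $X$ has $\mathcal{C}_X=\mathcal{K}_X$, i.e. that removing only the hyperplanes coming from $(-2)$-classes \emph{in $S$} is enough to avoid all walls of the K\"ahler cone. For $K3$ surfaces the K\"ahler (ample) cone is cut out exactly by $(-2)$-classes, but for IHS fourfolds of $K3^{[2]}$-type the wall divisors are governed by the Amerik--Verbitsky description (classes $\delta$ with $q(\delta)\in\{-2,-10\}$ and appropriate divisibility), so a priori a class in $L^{1,1}(x)\setminus S$ could produce a wall even though $x$ avoids the $S$-hyperplanes. The resolution uses that $T$ contains an ample class (it has signature $(1,r-1)$ and, in the decomposition $T=\overline T\oplus\langle -2\rangle$ from \cite{BCS}, the $\langle -2\rangle$-summand together with the structure of $\overline T$ controls exactly which monodromy-reflections are relevant): any wall-defining class orthogonal to the period decomposes relative to $T\oplus S$ and one shows its ``$S$-part'' would itself have to be a $(-2)$-class in $S$ orthogonal to $x$, contradiction. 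This is precisely the place where the specific shape of $T$ and the computation of discriminant forms (via $E_6^\vee(3)$, $A_2^{\oplus 5}$, etc.) from \cite{BCS} enters; I would invoke \cite[Theorem 5.5]{BCS} and the wall analysis of \cite{AmerikVerbitsky} rather than redo it, and only check that the hypotheses (period avoiding $S$-walls, $\dim S(\xi)\geq 2$) are exactly what is needed to place $(X,\eta)$ at a Hausdorff point carrying the automorphism.
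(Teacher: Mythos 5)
There is a genuine gap in the converse direction, at exactly the step you flag as the main obstacle. Your plan is to show that the marked pair $(X,\eta)$ produced by surjectivity of the period map satisfies $\mathcal{C}_X=\mathcal{K}_X$, so that the Hodge monodromy $\eta\circ\rho(\bar\sigma)\circ\eta^{-1}$ automatically preserves a K\"ahler class. This is false in general, and your proposed resolution does not repair it. Removing only the hyperplanes $H_\delta$ for $\delta\in S$ with $q(\delta)=-2$ says nothing about wall divisors lying in $T$ itself (the set $\Delta(T)$ of \S\ref{injectivityperiod}) nor about wall divisors with nonzero components in both $T$ and $S$ (the set $\Delta'(L)$ of Lemma \ref{characterization-Delta'}). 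A class $\delta\in T$ is orthogonal to the period for free and has zero $S$-part, so it cannot be "reduced to a $(-2)$-class in $S$ orthogonal to $x$"; a class in $\Delta'(L)$ has $S$-part of negative but arbitrary square. Both kinds of classes genuinely occur — the entire machinery of $K(T)$-chambers and the example $T=U\oplus A_2^{\oplus 5}\oplus\langle -2\rangle$ with two distinct chambers exist precisely because $\Delta(T)\neq\emptyset$ in general — so the fibre of the period map over $x$ typically contains non-Hausdorff points and the first $X$ you pick need not carry the automorphism. (Your closing appeal to \cite[Theorem 5.5]{BCS} is also circular, since that is the statement being proved.)

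The correct argument, which is the one run in the proof of the generalization Theorem \ref{surjectivity}, needs much less than $\mathcal{C}_X=\mathcal{K}_X$: it only needs that $\phi(T)$ is not contained in $\delta^{\perp}$ for any single wall divisor $\delta\in\Delta(X)$. This \emph{is} what avoiding the $S$-hyperplanes buys you, since $\phi(T)\subset\delta^{\perp}$ would force $\phi^{-1}(\delta)\in S$ to be a wall class orthogonal to the period. By local finiteness it follows that $\phi(T)_{\IR}\cap\mathcal{C}_X$ meets the complement of $\bigcup_{\delta\in\Delta(X)}\delta^{\perp}$, hence contains a class $h$ in some K\"ahler--Weyl chamber. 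One then invokes Markman's result (\cite[Proposition 5.14]{MarkmanTorelli}, together with Theorem \ref{wallsMBM}) to replace $X$ by the birational model $\tilde X$ whose K\"ahler cone corresponds to that chamber; the transported isometry fixes the K\"ahler class $h$ on $\tilde X$ and so lifts to an automorphism by Theorem \ref{HTT}, while $\tilde X$ still has period $x$. Since the statement only asserts that \emph{some} $(\rho,T)$-polarized manifold realizes the period, this change of birational model is both permitted and unavoidable. Your forward direction is fine (and in fact proves more than the statement asks, which only claims $\omega_X\in\Omega_T^{\rho,\xi}$).
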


A comparison with \cite[Appendix A]{BCS} shows that there are only two cases in the classification of non-symplectic automorphisms of prime order on \ihsk where the assumption on $T$ of Theorem \ref{ball} is not satisfied, namely $T=\langle 6\rangle$ and $T=\langle 6\rangle \oplus E_6^*(3)$.

\section{The image of the period map}\label{surjective-period}

The aim of this section is now to compute which are the period points, via the period map $\mathcal{P}$, corresponding to \ihskn which are
$(\rho,T)$-polarized for all primes $p\geq 3$, with given invariant lattice $T$; in particular, in the case of \ihskcom we consider $T$ as classified in \cite[Appendix A]{BCS}.

Recall the following definition (see \cite[Definition 1.13]{AmerikVerbitsky} and also \cite[Definition 5.10]{MarkmanTorelli}):
\begin{defi}
Let $X$ be an IHS manifold. A rational non-zero class $\delta\in H^{1,1}(X)\cap H^2(X,\IQ)$ with $q(\delta)<0$ is said to be \textit{monodromy birationally minimal} (MBM) if there exists a bimeromorphic map $f:X\dashrightarrow Y $ and a monodromy operator $g\in\Mo^2(X)$ which is also a Hodge isometry such that the hyperplane $\delta^{\perp}\subset H^{1,1}(X)\cap H^2(X,\IR)$ contains a face of $g(f^*(\mathcal{K}_{Y}))$. Let $\Delta(X)$ be the set of integral MBM classes $\delta\in H^{1,1}(X)\cap H^2(X,\IZ)$ on $X$.
\end{defi}

We call the classes in $\Delta(X)$ {\it wall divisors} (see also \cite{Knutsen-Lelli-Chiesa-Mongardi}). An essential result for what follows is:

\begin{theorem}\cite[Theorem 6.2]{AmerikVerbitsky}\label{wallsMBM}
 Let $X$ be an IHS manifold, $\Delta(X)$ as above and 
 $$
 \mathcal{H}:=\bigcup_{\delta\in \Delta(X)} \delta^{\perp}\subset H^{1,1}(X)\cap H^2(X,\IR)
 $$

 Then the K\"ahler cone of $\mathcal{K}_X$ is a connected component of $\mathcal{C}_X\setminus\mathcal{H}$.

\end{theorem}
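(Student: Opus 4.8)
This theorem is cited from \cite[Theorem 6.2]{AmerikVerbitsky}, so the plan is to reconstruct that argument, whose backbone is Markman's monodromy-theoretic description of the K\"ahler cone together with the boundedness of MBM classes. First I would recall from \cite{MarkmanTorelli} (resting on Huybrechts \cite{Huybrechts}, Boucksom's divisorial Zariski decomposition, and Markman \cite{Markmanintegral}) that $\mathcal{C}_X$ admits a decomposition into open polyhedral subcones: the movable cone $\mathcal{MV}(X)$ is cut by the hyperplanes $\delta^{\perp}$, for $\delta$ a prime exceptional class, into the birational K\"ahler chambers $f^{\ast}(\mathcal{K}_Y)$, one for each bimeromorphic model $f\colon X\dashrightarrow Y$; and $\mathcal{C}_X$ is tiled, face to face, by the translates $g(\mathcal{MV}(X))$ under the Hodge-monodromy isometries $g$. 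Composing, $\mathcal{C}_X$ is covered by the \emph{monodromy--K\"ahler chambers} $g(f^{\ast}(\mathcal{K}_Y))$, glued along common faces, of which $\mathcal{K}_X$ (take $g=f=\id$) is one.

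Next I would match the walls of this decomposition with $\mathcal{H}$. By the definition of MBM class, $\delta\in\Delta(X)$ precisely when $\delta^{\perp}$ supports a codimension-one face of some chamber $g(f^{\ast}(\mathcal{K}_Y))$; conversely every such face is supported by $\delta^{\perp}$ for the primitive generator $\delta$ of the rank-one lattice orthogonal to it in $\NS(X)$, and, since a face separating two chambers inside $\mathcal{C}_X$ contains positive classes, $q(\delta)<0$ by the Hodge index theorem. Thus $\mathcal{H}=\bigcup_{\delta\in\Delta(X)}\delta^{\perp}$ is exactly the union of the walls, so $\mathcal{C}_X\setminus\mathcal{H}$ is the disjoint union of the open chambers. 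To upgrade ``open chamber'' to ``connected component'' --- ruling out that some distant wall passes through the interior of $\mathcal{K}_X$ --- one needs $\mathcal{H}$ to be locally finite in $\mathcal{C}_X$. This comes from the boundedness of MBM classes proved in \cite{AmerikVerbitsky}: for $X$ of fixed deformation type there is a constant bounding $|q(\delta)|$ and the divisibility of $\delta$ for every MBM class, so modulo $\Mo^2(X)$ there are only finitely many; as $\Mo^2(X)$ acts properly discontinuously on $\mathcal{C}_X$, only finitely many $\delta^{\perp}$ meet any given compact subset. With $\mathcal{H}$ locally finite and closed in $\mathcal{C}_X$, the open convex cone $\mathcal{K}_X$ is open and closed in $\mathcal{C}_X\setminus\mathcal{H}$, hence a connected component.

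The decisive ingredient behind the boundedness --- and the main theorem of \cite{AmerikVerbitsky} --- is the deformation invariance of MBM classes: $\delta$ is MBM on $X$ if and only if it is MBM on every IHS deformation $X'$ of $X$ on which $\delta$ remains of type $(1,1)$. I expect this to be the principal obstacle. Its proof specializes $X$, along twistor lines and Hodge-generic deformations, to a manifold of Picard rank at most two, where the extremal rays of the K\"ahler cone are cut out by concretely classified minimal rational curves and $q(\delta)$ is forced into a short list; the conclusion is then propagated back using the density of twistor lines through a general point of the period domain together with an openness--closedness argument on the space of deformations. Pinning down exactly when a wall of the K\"ahler cone can appear or disappear under such a deformation, while $\delta$ stays of type $(1,1)$, is the technical heart of the matter.
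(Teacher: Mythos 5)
This statement is quoted verbatim from \cite[Theorem 6.2]{AmerikVerbitsky}; the paper you are reading gives no proof of it, so there is no internal argument to compare yours against --- what follows measures your reconstruction against the cited source. Your outline identifies the right skeleton: Markman's tiling of $\mathcal{C}_X$ by the K\"ahler--Weyl chambers $g(f^{\ast}\mathcal{K}_Y)$, the observation that every codimension-one face of such a chamber lies on $\delta^{\perp}$ for some $\delta\in\Delta(X)$ (this is essentially the definition of an MBM class), and the fact that no $\delta^{\perp}$ meets the interior of any chamber (Remark \ref{utile}(2)), so that the chambers sit inside $\mathcal{C}_X\setminus\mathcal{H}$ and $\mathcal{K}_X$ is one of them.

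Two caveats on where your route diverges from, or over-claims relative to, \cite{AmerikVerbitsky}. First, you derive the ``open chamber $\Rightarrow$ connected component'' upgrade from local finiteness of $\mathcal{H}$, which you in turn base on a uniform bound for $|q(\delta)|$ and the divisibility of MBM classes. That boundedness is only a \emph{conjecture} in \cite{AmerikVerbitsky}; it is known for $K3^{[n]}$-type by Bayer--Hassett--Tschinkel and Mongardi (which is exactly what the present paper invokes in its example with $T=\langle 6\rangle$ and in Lemma \ref{loc-finiteness}), but Theorem 6.2 is stated for arbitrary IHS manifolds and its proof cannot rest on it. The closedness of $\mathcal{K}_X$ in $\mathcal{C}_X\setminus\mathcal{H}$ is obtained instead from Huybrechts' numerical description of the K\"ahler cone: a class of $\mathcal{C}_X$ on the boundary of $\mathcal{K}_X$ is orthogonal to some rational curve whose dual divisor class is MBM, hence lies in $\mathcal{H}$; no local finiteness of the full arrangement is required for this direction. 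Second, the deformation invariance of MBM classes, which you present as the technical heart, is indeed the main theorem of \cite{AmerikVerbitsky} but is not an input to Theorem 6.2 itself (a statement about a single fixed $X$); it is what this paper uses \emph{elsewhere}, namely to define $\Delta(L)$ and to apply \cite[Theorem 2.16]{AmerikVerbitskyMK} in the proof of Theorem \ref{surjectivity}. So your reconstruction is correct in the setting this paper actually needs ($K3^{[n]}$-type), but as a proof of the cited theorem in full generality it leans on an unproved boundedness statement and misplaces the role of deformation invariance.
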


The previous theorem generalizes the analogous result for $K3$ surfaces, where 
MBM classes replace the $(-2)$--curves. Recall also the following:

\begin{defi}\cite[Definition 6.1]{AmerikVerbitsky}
A {\it K\"ahler-Weyl chamber} of $X$ is the image $g(f^*\mathcal{K}_Y)$ of the K\"ahler cone of $Y$ under some $g\in \Mo^2(X)$ which is also an isomorphism of Hodge structures, where $Y$ runs through  all birational models of $X$ and $f:X\dashrightarrow Y$.

\end{defi}
\begin{rem}\label{utile}
\begin{itemize}
\item[1)] By \cite[Lemma 5.12]{MarkmanTorelli} if $X_1$ and $X_2$ are birational, then 
the birational map defines a parallel transport operator which is an Hodge isometry. This maps K\"ahler-Weyl chambers in $\cC_{X_1}$ to  K\"ahler-Weyl chambers in $\cC_{X_2}$. In particular the number of connected components in Theorem \ref{wallsMBM} does not depend on the birational model we have chosen. 
\item[2)] Fix an MBM class $\delta\in\NS(X)$, then as seen in the definition $\delta^{\perp}$
does not necessarily contain  a face of the K\"ahler cone $\cK_X$ of $X$ but it has constant sign on it, i.e. $(\delta,k)>0$ for all $k\in
\cK_X$ or $(-\delta,k)>0$ for all $k\in\cK_X$. In particular $\delta$ can not be zero on $\cK_X$, otherwise $\cK_X\subset \delta^\perp$, which is not possible by definition of MBM class.
\end{itemize}
\end{rem}

Let $\mathcal{M}_L^+$ be a connected component of the moduli space of marked \ihskn and from now on let $\mathcal{P}$ be the restriction of the period map to a connected component  $\mathcal{M}_L^+\ra \Omega_L$. 
Define $\Delta(L)$ as the set of $\bar{\delta}\in L$ such that there exists $(X,\phi)\in \mathcal{M}_L^+$ with $\phi(\bar{\delta})\in \NS(X)$ a MBM class. Observe that by \cite[Theorem 2.16]{AmerikVerbitskyMK} we have $\Delta(X)=\phi(\Delta(L))\cap \NS(X)$. We denote $\Delta(S)=\Delta(L)\cap S$.

\begin{theorem}\label{surjectivity}
Let $T\subset L$ be a fixed primitive embedding and $\rho:G\ra O(L)$ be  a group homomorphism such that there exists a $(\rho,T)$-polarized \ihskn $(\bar{X},\bar{\phi})\in\mathcal{M}_L^+$.
\begin{enumerate}
 \item \label{surjectivity-i} Let $(X,\phi)\in \mathcal{M}_L^+$ be a $(\rho, T)$-polarized \ihskn such that $H^{2,0}(X)$ is contained in the eigenspace of $H^2(X,\IC)$ relative to $\xi$. 
 Then $	\mathcal{P}(X,\phi)\in \Omega_T^{\rho,\xi}\setminus\Delta$,  where
\[
\Delta:=\bigcup_{\delta\in \Delta(S)} (H_{\delta}\cap \Omega_T^{\rho,\xi}).
\] 
and $H_{\delta}$ is the hyperplane orthogonal to $\delta$ in $\IP(L_{\IC})$. 
\item \label{surjectivity-ii} Conversely, every point of $\Omega_T^{\rho,\xi}\setminus\Delta$ is the period point of some $(\rho, T)$-polarized \ihskn.
\end{enumerate}

 \end{theorem}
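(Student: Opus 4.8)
The plan is to reduce both statements to the surjectivity of the period map on a connected component of $\mathcal{M}_L^+$ (Theorem \ref{GTT}(1)), combined with the Hodge-theoretic Torelli Theorem \ref{HTT} and the description of the K\"ahler cone via MBM classes (Theorem \ref{wallsMBM}). I would treat part \eqref{surjectivity-i} first. Let $(X,\phi)$ be a $(\rho,T)$-polarized \ihskn with $H^{2,0}(X)$ in the $\xi$-eigenspace, and write $\omega=\mathcal{P}(X,\phi)$. Since $\phi_{|T}=\iota$ is a polarization and $\sigma^*=\phi\circ\rho(\bar\sigma)\circ\phi^{-1}$ acts trivially on $\iota(T)$, the period $\omega$ lies in $S(\xi)$ as already observed before Theorem \ref{ball}; moreover $q(\omega+\bar\omega)>0$ because it computes the (positive) self-intersection of the real and imaginary parts of a holomorphic $2$-form, so $\omega\in\Omega_T^{\rho,\xi}$. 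It remains to show $\omega\notin H_\delta$ for every $\delta\in\Delta(S)$. Suppose $(\delta,\omega)=0$ for some $\delta\in\Delta(S)\subset S\subset L$. Then $\phi(\delta)\in L^{1,1}(\omega)=\NS(X)$, and $\delta\in\Delta(L)$ means $\phi(\delta)$ is an integral MBM class on $X$ (here I use $\Delta(X)=\phi(\Delta(L))\cap\NS(X)$ from \cite[Theorem 2.16]{AmerikVerbitskyMK}). By Remark \ref{utile}(2) the class $\phi(\delta)$ has constant nonzero sign on $\mathcal{K}_X$, say $(\phi(\delta),k)>0$ for all $k\in\mathcal{K}_X$. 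But $\sigma^*$ preserves a K\"ahler class $k_0$ (one may average an arbitrary K\"ahler class over the finite group $G$, since $\sigma$ is an automorphism), and $\sigma^*\phi(\delta)=\phi(\rho(\bar\sigma)\delta)=\phi(\delta)$ because $\delta\in S$ and the eigenvalue of $\rho(\bar\sigma)$ on $S$ relevant here — wait, $\delta$ is a \emph{real} class in $S$ on which $\rho(\bar\sigma)$ acts with the nontrivial eigenvalues; so instead I argue that $\mathcal{K}_X$ itself is $\sigma^*$-invariant, hence for $k\in\mathcal{K}_X$ we also have $\sigma^*k\in\mathcal{K}_X$, and summing over the $G$-orbit gives a $G$-invariant K\"ahler class $k_0\in T_\IR=(L^\rho)_\IR$. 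Then $(\phi(\delta),k_0)=\langle\phi(\delta),\phi(k_0')\rangle$ with $k_0'\in T$, so $(\phi(\delta),k_0)=0$ since $\delta\in S=T^\perp$; this contradicts the constant positive sign of $\phi(\delta)$ on $\mathcal{K}_X$. Hence no such $\delta$ exists and $\omega\in\Omega_T^{\rho,\xi}\setminus\Delta$.

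For part \eqref{surjectivity-ii}, fix $\omega\in\Omega_T^{\rho,\xi}\setminus\Delta$. As noted in the text, such an $\omega$ automatically satisfies $q(\omega)=0$, so $\omega\in\Omega_L$, and by Theorem \ref{GTT}(1) there is a marked pair $(X,\phi)\in\mathcal{M}_L^+$ with $\mathcal{P}(X,\phi)=\omega$. The isometry $g:=\rho(\bar\sigma)\in\Mo^2(L)$ (it lies in the monodromy group because such a polarized pair $(\bar X,\bar\phi)$ exists, cf. Remark \ref{rho-monodromy}) fixes $\omega$ up to the scalar $\xi$ — indeed $\omega\in S(\xi)$ — hence $h:=\phi\circ g\circ\phi^{-1}\in\Mo^2(X)$ is an integral Hodge isometry of $H^2(X,\IZ)$. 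By Theorem \ref{HTT}(1), $h=f^*$ for some bimeromorphic self-map $f$ of a birational model; more precisely there exists a birational model $X'$ of $X$ and a bimeromorphic $f:X\dashrightarrow X'$ with the induced parallel transport operator realizing $h$. To upgrade $f$ to an \emph{automorphism}, by Theorem \ref{HTT}(2) it suffices to arrange that $h$ preserves a K\"ahler class. This is exactly where the condition $\omega\notin\Delta$ is used: since $T\subset\NS(X)$ contains a vector $t_0$ with $q(t_0)>0$ (as $T$ has signature $(1,r-1)$), the class $\phi(t_0)$ lies in the positive cone $\mathcal{C}_X$ and is $h$-invariant; the hyperplanes $\phi(\delta)^\perp$ for $\delta\in\Delta(S)$ do not pass through $\phi(t_0)$ because $\delta\in S=T^\perp$ forces $\langle\phi(\delta),\phi(t_0)\rangle=0$ — so that argument does not directly locate a chamber. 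Instead, the correct use of $\omega\notin\Delta$ is: the set of $\delta\in\Delta(X)=\phi(\Delta(L))\cap\NS(X)$ consists precisely of those $\phi(\delta)$, $\delta\in\Delta(L)$, with $(\delta,\omega)=0$; since $\omega\notin\Delta$, none of these lies in $S$, i.e. every MBM class on $X$ pairs nontrivially with $T$. Now $h$ permutes the K\"ahler-Weyl chambers (Remark \ref{utile}(1)) and fixes $T_\IR\cap\mathcal{C}_X$ pointwise; choosing a K\"ahler class $k$ of $X$, I need $h(k)$ to again be K\"ahler, equivalently $k$ and $h(k)$ lie in the same connected component of $\mathcal{C}_X\setminus\mathcal{H}$. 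One shows this by observing that $h$ acts trivially on $T_\IR$, which meets the interior of the chamber $\mathcal{K}_X$ (because $\mathcal{K}_X$ contains a $G$-invariant K\"ahler class as in part (i)); since $h$ fixes a point of the open chamber $\mathcal{K}_X$ and permutes chambers, it must fix $\mathcal{K}_X$ as a set. Therefore $h$ preserves a K\"ahler class, Theorem \ref{HTT}(2) gives $\sigma\in\Aut(X)$ with $\sigma^*=h=\phi\circ\rho(\bar\sigma)\circ\phi^{-1}$, and setting $\iota:=\phi_{|T}$ makes $(X,\phi)$ a $(\rho,T)$-polarized marked pair with period $\omega$.

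The main obstacle is the argument in part \eqref{surjectivity-ii} that $h=\phi\circ\rho(\bar\sigma)\circ\phi^{-1}$ preserves a K\"ahler class: one must pass from the abstract Hodge isometry $h$, which a priori only preserves a \emph{birational} K\"ahler-Weyl chamber (Theorem \ref{HTT}(1)), to genuine preservation of $\mathcal{K}_X$. This is precisely where the deleted hyperplane arrangement $\Delta$ enters, ensuring that no wall of $\mathcal{C}_X$ is orthogonal to the whole invariant lattice $T$, so that $T_\IR$ meets a single chamber; the finiteness of $G$ then forces $h$ to fix that chamber. One should also check the compatibility detail that the $\xi$-eigenvalue condition on $H^{2,0}$ is automatic once $\sigma^*=h$ and $\omega\in S(\xi)$, and that the whole construction is independent of the chosen birational model, which follows from Remark \ref{utile}(1).
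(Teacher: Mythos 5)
Your part (\ref{surjectivity-i}) is correct and follows the paper's argument; in fact you make the final contradiction more explicit than the paper does, by producing the $G$-invariant K\"ahler class $k_0\in\phi(T)_\IR$ through averaging and observing that $(\phi(\delta),k_0)=0$ for $\delta\in\Delta(S)\subset S=T^\perp$ contradicts Remark \ref{utile}(2).

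Part (\ref{surjectivity-ii}) has a genuine gap at the decisive step. You assert that $\phi(T)_\IR$ meets the interior of $\mathcal{K}_X$ ``because $\mathcal{K}_X$ contains a $G$-invariant K\"ahler class as in part (i)''. This is circular: in part (i) the invariant K\"ahler class was obtained by averaging over the automorphism group $G\subset\Aut(X)$, whose existence is a hypothesis there; in part (ii) the existence of that automorphism is precisely what is to be proved, and at that stage all you know is that $h=\phi\circ\rho(\bar\sigma)\circ\phi^{-1}$ is a Hodge monodromy permuting the K\"ahler--Weyl chambers, so averaging a K\"ahler class over $\langle h\rangle$ only produces a positive class in $\phi(T)_\IR\cap\mathcal{C}_X$, not necessarily a K\"ahler one. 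The condition $\omega\notin\Delta$ gives only that no wall $\delta^\perp$, $\delta\in\Delta(X)$, contains $\phi(T)_\IR$, hence (each $\delta^\perp\cap\phi(T)_\IR$ being a proper subspace and the arrangement locally finite) that $\phi(T)_\IR\cap\mathcal{C}_X$ meets the interior of \emph{some} K\"ahler--Weyl chamber --- not that this chamber is $\mathcal{K}_X$. Indeed the conclusion you reach, namely that $(X,\phi)$ itself carries the automorphism, is stronger than the statement and false in general; the remark immediately following Theorem \ref{surjectivity} warns against exactly this. The missing step is the paper's second case: when $\mathcal{K}_X\cap\phi(T)_\IR=\emptyset$, choose $h_0\in\phi(T)_\IR$ in the interior of a K\"ahler--Weyl chamber $\tau(f^*\mathcal{K}_{\tilde{X}})$, pass to the corresponding birational model $\tilde{X}$ with marking $g^{-1}\circ\phi$, where $g=\tau\circ f^*$, and check that the conjugate $g^{-1}\circ h\circ g$ fixes the genuine K\"ahler class on $\tilde{X}$ corresponding to $h_0$; then Theorem \ref{HTT}(2) applies to $\tilde{X}$, which has the same period $\omega$ and is the $(\rho,T)$-polarized manifold whose existence the theorem asserts.
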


\begin{proof}
Given a $(\rho, T)$-polarized $(X,\phi)\in\mathcal{M}_L^+$, where $\phi$ denotes the marking, let $\omega:=\phi^{-1}(H^{2,0}(X))$ be its period. If we had $\omega\in H_{\delta}$ for a $\delta \in \Delta(S)$, then, by definition of $\NS(X)$ as orthogonal complement of $H^{2,0}(X)$ in $H^2(X,\IZ)$, we would get $\phi(\delta)\in\NS(X)$. But then $\phi(\delta)$ is a MBM class on $X$ by \cite[Theorem 2.16]{AmerikVerbitskyMK}. Now by Remark \ref{utile} we have $(\pm\phi(\delta), k)>0$ for
all $k\in\cK_X$, in particular this is not zero. 

Conversely, let $\IC\omega\in \Omega_T^{\rho,\xi}\setminus\Delta$. By the surjectivity of the period map of $T$-polarized manifolds (see \cite[Proposition 3.2]{C4}), we know that there exists a $T$-polarized marked pair $(X,\phi)$ such that its period is $\IC\omega$; let $\IC\omega_X:=\phi(\IC\omega)$ be the line spanned by a symplectic holomorphic 2-form  on $X$. Define $\psi=\phi\circ\rho(\bar{\sigma})\circ\phi^{-1}$. It is an isometry of $H^2(X,\IZ)$ and it preserves the Hodge structure on $H^2(X,\IC)$ since 
$$
\psi(\omega_X)=\phi(\rho(\bar{\sigma})(\omega))=\phi(\xi\omega)=\xi\omega_X.
$$

It follows from Remark \ref{rho-monodromy} that $\psi\in \Mo^2(X)$.
We want now to use Markman's Torelli theorem to show that on $X$, or on a birational model of it, we can find a non-symplectic automorphism and thus we get the surjectivity of the restriction of the period map.

We study now the behaviour of 
$\psi$ with respect to the K\"ahler cone. 

{\bf First case}. If $\cK_X\cap \phi(T)\neq \emptyset $, this means that $\psi$ fixes a K\"ahler class. By Markman's Torelli theorem \cite[Theorem 1.3]{MarkmanTorelli}, $\psi$ is then induced by an automorphism $\sigma$, i.e. $\sigma^*=\psi$ and $(X,\phi)$ is $(\rho,T)$-polarized.

{\bf Second case}. If $\cK_X\cap \phi(T)= \emptyset$, we remark that for all $\delta\in \Delta(X)$, the invariant sublattice $H^2(X,\IZ)^{\psi}=\phi(T)$ is not contained in $\delta^{\perp}$. Otherwise there would exist a $\delta\in \Delta(X)$ orthogonal to $\phi(T)$, hence such that $\phi^{-1}(\delta)\in \Delta(S)$ (in particular is contained in $S$)  
but since $\delta\in\NS(X)$ we have also $(\phi^{-1}(\delta),\omega)=0$ which is in contradiction with $\omega\notin H_{\phi^{-1}(\delta)}\cap \Omega_T^{\rho,\xi}$. As a consequence, $\phi(T)$ is not contained in $\bigcup_{\delta\in\Delta(X)}\delta^{\perp}$.

In particular, this shows that the intersection of $H^2(X,\IZ)^{\psi}=\phi(T)$ with $\mathcal{C}_X\setminus\bigcup_{\delta\in\Delta(X)}\delta^{\perp}$ is non-empty. 
In other words, there exists $h\in H^2(X,\IZ)^{\psi}\cap \mathcal{K}$ for $\mathcal{K}$ a K\"ahler-Weyl chamber (see \cite[Definition 5.10]{MarkmanTorelli}) and so a birational map $f:X\dashrightarrow\tilde{X}$, $y$ a K\"ahler class on $\tilde{X}$ and $\tau$ a monodromy operator 
preserving the Hodge decomposition on $X$ such that $\tau(f^*(y))=h$. Now consider the isometry $\tilde{\psi}=g^{-1}\circ\psi\circ g$ on $H^2(\tilde{X},\IZ)$, for $g=\tau\circ f^*$: it is easy to see (with a computation as above) that this is a monodromy operator preserving the Hodge decomposition and it satisfies $\tilde{\psi}
(y)=y$, hence it fixes a K\"ahler class, and moreover $\tilde{X}$ has period $\omega$ and is $(\rho,T)$-polarized.

\end{proof} 

\begin{rem}
The previous theorem does not say that every birational model in a fiber of the previous period map admits a non symplectic automorphism. This is a priori not true, it becomes true if for a certain $X$ we have $\cC_X\subset \phi(T)_\mathbb{R}$. See the discussion
in the next section. 
\end{rem}
As a byproduct of the first part of the proof of Theorem \ref{surjectivity} above we obtain the following
\begin{cor}\label{iperpianiDeltaS}
Given a marked pair $(X,\phi)\in\mathcal{M}_L^+$, if $\mathcal{P}(X,\phi)\in\Delta$ then there is no automorphism of $X$ of prime order $p$ with invariant sublattice isometric to $T$.
\end{cor}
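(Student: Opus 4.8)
The plan is to establish the contrapositive form of the $(\rho,T)$-polarized case already contained in the first part of the proof of Theorem~\ref{surjectivity}, and then to reduce the statement as phrased to that case.

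First I would unwind the hypothesis $\mathcal{P}(X,\phi)=\IC\omega\in\Delta$: there is $\delta\in\Delta(S)$ with $(\delta,\omega)=0$, and since $\Delta\subseteq\Omega_T^{\rho,\xi}$ we have $\omega\in S(\xi)\subseteq S\otimes\IC$, so that $T=S^{\perp}\cap L$ and $\delta$ both lie in $L^{1,1}(\omega)$. Consequently $\phi(T)\subseteq\NS(X)$, $\phi(\delta)\in\NS(X)$ and $\phi(\delta)\perp\phi(T)$. By \cite[Theorem~2.16]{AmerikVerbitskyMK} the class $\phi(\delta)$ is a wall divisor on $X$, hence by Remark~\ref{utile}(2) one has $(\pm\phi(\delta),k)>0$ for every $k\in\cK_X$; in particular $(\phi(\delta),\cdot)$ does not vanish anywhere on $\cK_X$.

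Next I would suppose that $\sigma\in\Aut(X)$ has prime order $p$ with $H^2(X,\IZ)^{\sigma^{*}}$ isometric to $T$. Since $\sigma$ has finite order and $\sigma^{*}$ preserves the convex open cone $\cK_X$, averaging any K\"ahler class over $\langle\sigma\rangle$ yields a $\sigma^{*}$-invariant K\"ahler class $k_0\in\cK_X\cap H^2(X,\IZ)^{\sigma^{*}}_{\IR}$. When $H^2(X,\IZ)^{\sigma^{*}}=\phi(T)$ — that is, exactly when $(X,\phi)$ is $(\rho,T)$-polarized — we then have $k_0\in\cK_X\cap\phi(T)_{\IR}$, whence $(\phi(\delta),k_0)=0$ because $\phi(\delta)\perp\phi(T)$, contradicting the previous paragraph. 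To pass from ``$H^2(X,\IZ)^{\sigma^{*}}$ isometric to $T$'' to this situation I would invoke that for \ihsk the cohomological action of a non-symplectic automorphism of prime order is determined, up to $O(L)$-conjugacy, by the $O(L)$-conjugacy class of its invariant sublattice in $L$ (\cite[Corollary~5.7]{BCS}); after replacing $\phi$ by $\phi\circ g$ for a suitable $g\in\Mo^2(L)$ conjugating $\phi^{-1}\circ\sigma^{*}\circ\phi$ to $\rho(\bar\sigma)$ one gets $\sigma^{*}=\phi\circ\rho(\bar\sigma)\circ\phi^{-1}$, and one checks that $\mathcal{P}(X,\phi)$ still lies in $\Delta$.

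I expect this last reduction to be the main obstacle. A priori the invariant lattice of $\sigma$ is only abstractly isometric to the distinguished copy $\phi(T)$, and the two need not coincide inside $\NS(X)$ — which here has rank strictly larger than $\rk T$, since it already contains the extra class $\phi(\delta)$ — so some input identifying the conjugacy class of $\sigma^{*}$ with that of $\rho(\bar\sigma)$ is indispensable, together with the verification that the change of marking keeps us inside $\mathcal{M}_L^{+}$ and does not move the period out of $\Delta$; for \ihsk this is supplied by \cite[Corollary~5.7]{BCS}, whereas for $K3^{[n]}$-type with $n\geq3$, where no such classification of cohomological actions is available, it is precisely the missing ingredient rather than the rest of the argument.
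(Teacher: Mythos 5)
Your first two steps are exactly the paper's argument: the corollary is announced as a byproduct of the first paragraph of the proof of Theorem~\ref{surjectivity}, namely that $\mathcal{P}(X,\phi)\in\Delta$ forces some $\phi(\delta)$, $\delta\in\Delta(S)$, to be an MBM class in $\NS(X)$ orthogonal to $\phi(T)$, whereas an automorphism with invariant sublattice $\phi(T)$ would yield (by averaging a K\"ahler class over the cyclic group, a step the paper leaves implicit but which you correctly supply) an invariant K\"ahler class in $\phi(T)_{\IR}\cap\cK_X$ on which $\phi(\delta)$ vanishes, contradicting Remark~\ref{utile}(2). Where you go beyond the paper is in taking ``isometric to $T$'' literally: the paper does not carry out your third step, and its later restatement just before Proposition~\ref{Delta'} (``no automorphism \dots with invariant sublattice $T$'') shows it intends the invariant sublattice to be $\phi(T)$ itself, i.e.\ the $(\rho,T)$-polarized situation. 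Your observation that the literal phrasing would require identifying an abstractly isometric invariant lattice with the distinguished copy $\phi(T)$ --- via \cite[Corollary 5.7]{BCS} for $n=2$, with the attendant checks that the change of marking preserves the connected component and keeps the period in $\Delta$, and with no substitute available for $n\geq 3$ --- is a legitimate criticism of the statement's wording rather than a gap in the intended proof; for the statement as the authors use it, you can stop after your second paragraph.
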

\begin{rem}
Observe that if $\dim S(\xi)=1$ then one has $\dim \Omega_T^{\rho,\xi}=0$, so that,
if this space is not empty, the period domain $\Omega_T^{\rho,\xi}$ consists exactly of one point and there exist finitely many $(\rho, T)$-polarized \ihskn. 
\end{rem}

Let $\mathcal{M}_T^{\rho,\xi}$ be the set of $(\rho,T)$-polarized pairs $(X,\phi)\in\mathcal{M}_L^+$ such that $\rho(\bar{\sigma})(\omega)=\xi \omega$, for $\omega$ the line in $L\otimes \IC$ defined by $\omega=\phi^{-1}(H^{2,0}(X))$. Theorem \ref{surjectivity} implies that the period map restricts to a surjection $\mathcal{P}:\mathcal{M}_T^{\rho,\xi}\ra\Omega_T^{\rho,\xi}\setminus\Delta$.

\section{Injectivity locus of the period map}\label{injectivityperiod}

In this section we use techniques developed in \cite{Joumaah} to construct an injective restriction of the period map. Given a non-symplectic automorphism $\sigma$ of order $p\geq 3$ on $(X,\phi)\in \mathcal{M}_T^{\rho,\xi}$, where $\phi$ is the marking, let $\mathcal{C}_X^{\sigma}=\{y\in\cC_X |, \sigma^*(y)=y\}$ denote the connected component of the fixed part of the positive cone that contains a K\"ahler class. Observe that this is surely non empty since there exists a $\sigma$-invariant ample  class on $X$. Consider the following chamber decomposition:
\begin{equation}\label{invdec}
 \mathcal{C}_X^{\sigma}\setminus \bigcup_{\delta\in \Delta_{\sigma }(X)}\ \delta^{\perp}
\end{equation}
where $\Delta_{\sigma }(X)=\lbrace \delta\in\Delta(X)| \sigma^*(\delta)=\delta\rbrace$. Clearly a priori this set contains  
less walls.  One can see $\mathcal{C}_X^{\sigma}$ also as the intersection:
$$
\mathcal{C}_X\cap\phi(T)_{\IR}.
$$
Remark also that since $\delta$ is $\sigma$-fixed then $\delta^{\perp}\subset H^{1,1}(X)\cap H^2(X,\IR)$ is $\sigma$-invariant so that $\delta^{\perp}\cap \phi(T)_{\IR}$ is non empty.
\begin{defi}
The {\it stable invariant K\"ahler cone} $\tilde{\mathcal{K}}_X^{\sigma}$ of $X$ is the chamber of (\ref{invdec}) containing the invariant K\"ahler cone $\mathcal{K}_X^{\sigma}:=\mathcal{K}_X\cap\mathcal{C}_X^{\sigma}$.
\end{defi}

One can show easily that for any $(X,\phi)\in\mathcal{M}_T^{\rho,\xi}$ we have $\phi(\Delta(T))=\Delta_{\sigma }(X)$. 
Let $C_T$ be the connected component of the cone $\lbrace x\in T_{\mathbb{R}}\, |\, q(x)>0\rbrace\subset L_{\IR}$ such that $\phi(C_T)=\mathcal{C}_{X}^{\sigma}$. Moreover, let $K(T)$ be a connected component of the chamber decomposition
\begin{equation}\label{decomp-C-T}
C_T\setminus\bigcup_{\delta\in \Delta(T)}\delta^{\perp}\subset T_{\IR}.
\end{equation}
Remark that here we work only with classes in $T_\IR$ and not with the whole lattice $L_{\IR}$: this will be  fundamental to
get an injective restriction of the period map. 

\begin{lemma}\label{forany} The following hold:
 \begin{enumerate}
  \item For any $(X_0,\phi_0)\in\mathcal{M}_T^{\rho,\xi}$ with non--symplectic automorphism $\sigma_0\in\Aut(X_0)$ such that $\mathcal{K}_{X_0}^{\sigma_0}\subset \phi(K(T))$,  then  $\phi(K(T))=\tilde{\mathcal{K}}_{X_0}^{\sigma_0}$.
  \item If $\NS(X)=\phi(T)$, then all MBM classes are $\sigma$-invariant. Hence we have that $\cC_X^{\sigma}=\cC_X$. Moreover if $\mathcal{K}_X^{\sigma}\subset \phi(K(T))$ we have 
$$
\mathcal{K}_{X}^{\sigma}= \phi(K(T))=\tilde{\mathcal{K}}_X^{\sigma}=\cK_X.
$$ 
\end{enumerate}
\end{lemma}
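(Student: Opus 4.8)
The plan is to derive both parts from three facts available above: the identity $\phi(\Delta(T))=\Delta_{\sigma}(X)$ recorded just before the statement; the Amerik--Verbitsky description (Theorem \ref{wallsMBM}) of the K\"ahler cone as one connected component of the positive cone minus the union of the wall-hyperplanes; and the observation that for a $(\rho,T)$-polarized pair $\sigma^{\ast}=\phi\circ\rho(\bar\sigma)\circ\phi^{-1}$ acts as the identity on $\phi(T)$, since $\rho(\bar\sigma)$ fixes $T=L^{\rho}$ pointwise.

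For part (1) (writing $\phi_0$ for the marking denoted $\phi$ in the statement), I would first note that the hypothesis $\mathcal{K}_{X_0}^{\sigma_0}\subset\phi_0(K(T))$ forces $\phi_0(C_T)=\mathcal{C}_{X_0}^{\sigma_0}$: both are connected components of $\{q>0\}\cap\phi_0(T)_{\IR}$ and they share the non-empty cone $\mathcal{K}_{X_0}^{\sigma_0}$. As $\phi_0$ is an isometry carrying $T_{\IR}$ onto $\phi_0(T)_{\IR}$, $C_T$ onto $\mathcal{C}_{X_0}^{\sigma_0}$, and --- using $\phi_0(\Delta(T))=\Delta_{\sigma_0}(X_0)$ --- the hyperplane arrangement $\{\delta^{\perp}\}_{\delta\in\Delta(T)}$ onto $\{\delta'^{\perp}\}_{\delta'\in\Delta_{\sigma_0}(X_0)}$, it induces a bijection from the chambers of (\ref{decomp-C-T}) onto those of (\ref{invdec}); in particular $\phi_0(K(T))$ is a chamber of (\ref{invdec}). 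Now $\mathcal{K}_{X_0}^{\sigma_0}=\mathcal{K}_{X_0}\cap\mathcal{C}_{X_0}^{\sigma_0}$ is convex, non-empty and, by Theorem \ref{wallsMBM}, disjoint from every $\delta'^{\perp}$ with $\delta'\in\Delta_{\sigma_0}(X_0)\subseteq\Delta(X_0)$; hence it lies in a unique chamber of (\ref{invdec}), which is $\tilde{\mathcal{K}}_{X_0}^{\sigma_0}$ by definition. Since it also lies in the chamber $\phi_0(K(T))$ and distinct chambers are disjoint, $\phi_0(K(T))=\tilde{\mathcal{K}}_{X_0}^{\sigma_0}$.

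For part (2), since $\sigma^{\ast}$ acts as the identity on $\phi(T)=\NS(X)$ and every MBM class lies in $H^{1,1}(X)\cap H^2(X,\IZ)=\NS(X)$, all MBM classes are $\sigma$-invariant, so $\Delta_{\sigma}(X)=\Delta(X)$. Reading the positive and K\"ahler cones of the projective manifold $X$ inside $\NS(X)_{\IR}=\phi(T)_{\IR}$, one then has $\mathcal{C}_X^{\sigma}=\mathcal{C}_X\cap\phi(T)_{\IR}=\mathcal{C}_X$ and hence $\mathcal{K}_X^{\sigma}=\mathcal{K}_X\cap\mathcal{C}_X^{\sigma}=\mathcal{K}_X$. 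Feeding $\Delta_{\sigma}(X)=\Delta(X)$ and $\mathcal{C}_X^{\sigma}=\mathcal{C}_X$ into the definition of the stable invariant K\"ahler cone shows that $\tilde{\mathcal{K}}_X^{\sigma}$ is the connected component of $\mathcal{C}_X\setminus\bigcup_{\delta\in\Delta(X)}\delta^{\perp}$ containing $\mathcal{K}_X^{\sigma}=\mathcal{K}_X$, which by Theorem \ref{wallsMBM} is $\mathcal{K}_X$ itself; thus $\tilde{\mathcal{K}}_X^{\sigma}=\mathcal{K}_X$. Finally, applying part (1) to $(X,\phi,\sigma)$ under the hypothesis $\mathcal{K}_X^{\sigma}\subset\phi(K(T))$ gives $\phi(K(T))=\tilde{\mathcal{K}}_X^{\sigma}$, and assembling the identities yields $\mathcal{K}_X^{\sigma}=\phi(K(T))=\tilde{\mathcal{K}}_X^{\sigma}=\mathcal{K}_X$.

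The routine points I would keep short are the convexity --- hence connectedness --- of $\mathcal{K}_X^{\sigma}$ and of $\mathcal{C}_X^{\sigma}$ (so that each meets exactly one chamber) and the exact matching of the walls of (\ref{decomp-C-T}) with those of (\ref{invdec}). The one point deserving care is the bookkeeping of ambient spaces in part (2): the equality $\mathcal{C}_X^{\sigma}=\mathcal{C}_X$ must be understood with the positive cone taken inside $\NS(X)_{\IR}$, which is legitimate precisely because $\NS(X)=\phi(T)$ is $\sigma^{\ast}$-fixed pointwise and all wall-hyperplanes are defined over $\NS(X)$, so that the Amerik--Verbitsky chamber description transports without change from $H^{1,1}(X)\cap H^2(X,\IR)$ to $\NS(X)_{\IR}$; once this is made explicit, the chain of equalities closes formally.
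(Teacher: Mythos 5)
Your proof is correct and follows essentially the same route as the paper's: part (1) is exactly the elaboration of the observation that $\phi_0$ carries the decomposition (\ref{decomp-C-T}) onto (\ref{invdec}) via $\phi_0(\Delta(T))=\Delta_{\sigma_0}(X_0)$ and $\phi_0(C_T)=\mathcal{C}_{X_0}^{\sigma_0}$, and part (2) reduces to $\Delta_{\sigma}(X)=\Delta(X)$ together with Theorem \ref{wallsMBM}. Your explicit caveat that in part (2) the cones must be read inside $\NS(X)_{\IR}=\phi(T)_{\IR}$ is a more careful rendering of the paper's assertion that $H^{1,1}(X)\cap H^2(X,\IR)$ equals $\phi(T)_{\IR}$ (which, taken literally, would force $\rank T=21$), so if anything your write-up tightens the argument rather than departing from it.
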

\begin{proof}
\begin{enumerate}
\item It follows immediately from $\phi(\Delta(T))=\Delta_{\sigma_0 }(X)$ and from the fact that $\phi_0(C_T)=\mathcal{C}_X^{\sigma_0}$.

\item If $\NS(X)=\phi(T)$, $\phi(\Delta(T))=\Delta_{\sigma }(X)=\Delta(X)$, hence the chamber decomposition is exactly the one defining the invariant K\"ahler cone $\mathcal{K}_X^{\sigma}$. Observe moreover that in this case $H^{1,1}(X)\cap H^2(X,\IR)$ is equal to $\phi(T)_{\IR}$ so it is invariant by $\sigma$. Since the positive cone is an open subset there we have that $\cC_X^{\sigma}=\cC_X$ in particular $\cK_X^{\sigma}=\cK_X$.
\end{enumerate}

\end{proof}

\begin{rem}
Observe that item $(2)$ in  Lemma \ref{forany}  corresponds to the ``generic'' case, since a
generic \ihskn in the moduli space has N\'eron-Severi group equal to $\phi(T)$.

\end{rem}

When the equality $\mathcal{K}_X^{\sigma}= \phi(K(T))$ fails, there exists an algebraic wall $\delta\in\Delta(X)$ which is not 
$\sigma$-fixed, i.e. it is not in $\Delta_{\sigma}(X)$, but $\delta^{\perp}\cap \cC_X^{\sigma} \not=\emptyset$. In particular, it does not appear in the decomposition \eqref{invdec}.
More precisely since we have that  $\mathcal{K}_X^{\sigma}\subsetneq \phi(K(T))$ it exists an algebraic wall $\delta\in\Delta(X)$ such that $\delta^{\perp}\cap \phi(K(T))\neq\emptyset$: such a wall $\delta$ is neither in $\phi(\Delta(T))$ nor in $\phi(\Delta(S))$.
The first fact is clear, if $\delta\in \phi(\Delta (S))$ then there would be an ample invariant class ($X$ is projective) that has zero intersection
with it, which is impossible (see the proof of Theorem \ref{surjectivity}). In fact $\phi^{-1}(\delta)$ belongs to the set $\Delta'(L)$ of elements $\nu\in\Delta(L)$ such that there is a decomposition $\nu=\nu_T+\nu_S$ with 
$\nu_T\in T_{\IQ}$ and $\nu_S\in S_{\IQ}$ and $q(\nu_T)<0$, $q(\nu_S)<0$ (see \cite{Joumaah}). Recall the following:
\begin{lemma} \cite[Lemma 7.6]{Joumaah}\label{characterization-Delta'}
 The following are equivalent:
 \begin{enumerate}
  \item $\delta\in\Delta'(L)$;
  \item $\sign(T\cap\delta^{\perp})=(1,\rank T-2)$ and $\sign(S\cap\delta^{\perp})=(2,\rank S-3)$ ;
  \item $\delta\notin T$, $\delta\notin S$, $\Omega_L\cap \IP(S)\cap\delta^{\perp}\neq\emptyset$ and $\lbrace x\in T_{\mathbb{R}}\, |\, q(x)>0\rbrace\cap \delta^{\perp}\neq\emptyset$.
 \end{enumerate}

\end{lemma}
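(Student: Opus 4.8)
\textbf{Proof plan for Lemma \ref{characterization-Delta'}.}

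The plan is to prove the cyclic chain of implications $(1)\Rightarrow(2)\Rightarrow(3)\Rightarrow(1)$, throughout using that $\delta\in\Delta(L)$ is an MBM class, so $q(\delta)<0$, and that $L$ has signature $(3,20)$ with $T\subset L$ of signature $(1,r-1)$ and $S=T^{\perp}\cap L$ of signature $(2,21-r)$. The main tool in every step will be the elementary linear-algebra fact that for a vector $v$ of negative square in a real quadratic space $V$ of signature $(a,b)$ one has $\sign(v^{\perp})=(a,b-1)$, together with the observation that $T_{\IR}\oplus S_{\IR}$ is a finite-index (hence full rank, same signature) sublattice of $L_{\IR}$ so that $\delta^\perp\cap T_{\IR}$ and $\delta^\perp\cap S_{\IR}$ are computed by intersecting the respective projections of $\delta$.

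For $(1)\Rightarrow(2)$: write $\delta=\delta_T+\delta_S$ as in the definition of $\Delta'(L)$, with $q(\delta_T)<0$ and $q(\delta_S)<0$. Since $\delta_T\in T_{\IR}$ has negative square and $T_\IR$ has signature $(1,r-1)$, its orthogonal complement inside $T_\IR$ has signature $(1,r-2)$; but $T\cap\delta^{\perp}=T\cap\delta_T^{\perp}$ because $T\perp S$, giving the first signature assertion. Symmetrically $S\cap\delta^{\perp}=S\cap\delta_S^{\perp}$ has signature $(2,21-r-1)=(2,\rank S-3)$. For $(2)\Rightarrow(3)$: from $\sign(T\cap\delta^\perp)=(1,\rank T-2)$ and $\sign(S\cap\delta^\perp)=(2,\rank S-3)$ we get $\rank(T\cap\delta^\perp)=\rank T-1$ and $\rank(S\cap\delta^\perp)=\rank S-1$, so neither $T$ nor $S$ is contained in $\delta^\perp$, forcing $\delta\notin S$ and $\delta\notin T$ (if $\delta\in T$ then $S\subset\delta^\perp$, contradiction, and vice versa). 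The positive index $1$ in $T\cap\delta^\perp$ gives a vector of positive square there, i.e. $\{x\in T_\IR\mid q(x)>0\}\cap\delta^\perp\neq\emptyset$; the positive index $2$ in $S\cap\delta^\perp$ gives a real $2$-plane of signature $(2,0)$ there, which (by the standard description of $\Omega_L$, or of $\Omega_S$) meets the period domain, so $\Omega_L\cap\IP(S)\cap\delta^\perp\neq\emptyset$. For $(3)\Rightarrow(1)$: decompose $\delta=\delta_T+\delta_S$ orthogonally over $\IQ$ (legitimate since $T\oplus S$ has finite index in $L$, so $\delta\in T_\IQ\oplus S_\IQ$); the conditions $\delta\notin T,\delta\notin S$ say precisely $\delta_S\neq0$ and $\delta_T\neq0$. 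It remains to see $q(\delta_T)<0$ and $q(\delta_S)<0$. The hypothesis $\{x\in T_\IR\mid q(x)>0\}\cap\delta^\perp\neq\emptyset$ means $\delta_T^\perp\cap T_\IR$ contains a positive vector; since $T_\IR$ has signature $(1,r-1)$ this is impossible if $q(\delta_T)\ge 0$ (a nonzero $\delta_T$ of nonnegative square would already span the positive part, leaving $\delta_T^\perp\cap T_\IR$ negative semidefinite), so $q(\delta_T)<0$. Dually, $\Omega_L\cap\IP(S)\cap\delta^\perp\neq\emptyset$ provides a positive-definite $2$-plane inside $\delta_S^\perp\cap S_\IR$; as $S_\IR$ has signature $(2,21-r)$, this forces $q(\delta_S)<0$ by the same semidefiniteness argument. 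Finally $q(\delta)=q(\delta_T)+q(\delta_S)<0$ is automatic and consistent with $\delta$ being MBM, so all conditions defining $\Delta'(L)$ hold.

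The only genuinely delicate point is the claim in $(2)\Rightarrow(3)$ and $(3)\Rightarrow(1)$ that ``$\delta_S^\perp\cap S_\IR$ contains a positive-definite $2$-plane'' is equivalent to ``$\Omega_L\cap\IP(S)\cap\delta^\perp\neq\emptyset$'': one direction is clear (a point of $\Omega_L\cap\IP(S)\cap\delta^\perp$ is a line $\IC x\subset S_\IC$ with $q(x)=0$, $q(x+\bar x)>0$, $x\perp\delta$, and then $\IR\langle\Re x,\Im x\rangle$ is the desired plane, orthogonal to $\delta_S$ because $x\perp S\cap\delta^\perp\ni\dots$ — one checks $x\perp\delta$ and $x\in S_\IC$ forces $x\perp\delta_S$), and the converse follows by running the standard construction of a period point from a positive-definite oriented $2$-plane. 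I would isolate this as a one-line sub-observation rather than belabor it. Everything else is a bookkeeping exercise with Sylvester's law of inertia, so I expect no real obstacle beyond keeping the ranks and signatures straight.
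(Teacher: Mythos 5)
Your argument is correct. Note that the paper itself gives no proof of this lemma: it is quoted verbatim from Joumaah (\cite[Lemma 7.6]{Joumaah}), so there is no in-text argument to compare yours against; but your chain $(1)\Rightarrow(2)\Rightarrow(3)\Rightarrow(1)$ is exactly the expected Sylvester-type bookkeeping, and every step checks out: the identities $T\cap\delta^{\perp}=T\cap\delta_T^{\perp}$ and $S\cap\delta^{\perp}=S\cap\delta_S^{\perp}$, the signature count for the orthogonal complement of a negative vector, the Lorentzian fact that a nonzero vector of nonnegative square in signature $(1,r-1)$ has no positive vectors in its orthogonal complement (and its analogue for positive $2$-planes in signature $(2,\rank S-2)$), and the standard dictionary between points of $\Omega_L\cap\IP(S)$ and oriented positive-definite $2$-planes in $S_\IR$. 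Two small points worth making explicit in a final write-up: the condition $\delta\in\Delta(L)$ must be carried as a standing hypothesis, since items $(2)$ and $(3)$ do not reassert it and it is part of the definition of $\Delta'(L)$; and in $(3)\Rightarrow(1)$ the step ``$\delta\notin T$ iff $\delta_S\neq 0$'' uses the primitivity of $T$ in $L$ (so that $T_\IQ\cap L=T$), which you should state.
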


The chambers of 
$$\phi(K(T))\setminus\bigcup_{\delta\in \Delta(X)\cap\phi(\Delta'(L))}\delta^{\perp}$$
 will turn out to correspond to elements $(Y,\eta)$ in the fiber of $\mathcal{P}^{-1}(\cP(X,\phi))$ satisfying $\cK_Y^{\sigma}\subset\eta(K(T))$.

We introduce the following definition:

\begin{defi}
 A $(\rho,T)$-polarized manifold $(X,\phi)$ is {\it $K(T)$-general} if $\phi(K(T))=\mathcal{K}_X^{\sigma}$, where $\sigma$ is the automorphism of order $p$ induced by $\rho(\bar{\sigma})$.
\end{defi}

Let $\mathcal{M}_{K(T)}^{\rho,\xi}$ be the set of $(\rho,T)$-polarized pairs $(X,\phi)\in \mathcal{M}_{T}^{\rho,\xi}$ that are $K(T)$-general. We denote by $\Delta'(K(T))$ the set of $\delta\in\Delta'(L)$ such that $\delta^{\perp}\cap K(T)\neq\emptyset$.

\begin{theorem}\label{injectivity}
 The period map induces a bijection $$\mathcal{P}:\mathcal{M}_{K(T)}^{\rho,\xi}\ra \Omega:=\Omega_T^{\rho,\xi}\setminus(\Delta\cup\Delta'),$$ where $\Delta$ is the one defined in Theorem \ref{surjectivity} and $\Delta':=\bigcup_{\delta\in \Delta'(K(T))} (H_{\delta}\cap \Omega_T^{\rho,\xi})$.
\end{theorem}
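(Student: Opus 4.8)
The plan is to check that $\mathcal{P}$ restricted to $\mathcal{M}_{K(T)}^{\rho,\xi}$ (i) takes values in $\Omega$, (ii) is injective, and (iii) is surjective onto $\Omega$. Since $\mathcal{M}_{K(T)}^{\rho,\xi}\subset\mathcal{M}_T^{\rho,\xi}$, Theorem \ref{surjectivity}(\ref{surjectivity-i}) already gives that the image avoids $\Delta$. To see that it also avoids $\Delta'$ I would argue by contradiction: if $(X,\phi)\in\mathcal{M}_{K(T)}^{\rho,\xi}$ has period $\omega\in H_{\delta}$ with $\delta\in\Delta'(K(T))$, then $\phi(\delta)\in\NS(X)$, hence $\phi(\delta)\in\Delta(X)$ by \cite[Theorem 2.16]{AmerikVerbitskyMK}; but $\delta^{\perp}$ meets $K(T)$, so $\phi(\delta)^{\perp}$ meets $\phi(K(T))=\mathcal{K}_X^{\sigma}\subset\mathcal{K}_X$, contradicting Remark \ref{utile}(2) (a wall divisor never vanishes on the K\"ahler cone), exactly as in the proof of Theorem \ref{surjectivity}.

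For injectivity --- the real purpose of the $K(T)$-generality hypothesis --- suppose $(X_1,\phi_1),(X_2,\phi_2)\in\mathcal{M}_{K(T)}^{\rho,\xi}$ have the same period $\omega$, with non-symplectic automorphisms $\sigma_1,\sigma_2$. The isometry $g:=\phi_1\circ\phi_2^{-1}\colon H^2(X_2,\IZ)\to H^2(X_1,\IZ)$ is a parallel transport operator, since both pairs lie in $\mathcal{M}_L^+$, and a Hodge isometry, since $g(H^{2,0}(X_2))=\phi_1(\IC\omega)=H^{2,0}(X_1)$. Now $K(T)$-generality gives $\phi_i(K(T))=\mathcal{K}_{X_i}^{\sigma_i}\subset\mathcal{K}_{X_i}$ for $i=1,2$, and $g$ maps $\phi_2(K(T))$ onto $\phi_1(K(T))$; hence $g$ carries a K\"ahler class of $X_2$ to a K\"ahler class of $X_1$ and so preserves a K\"ahler class. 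By the Hodge-theoretic Torelli Theorem \ref{HTT}(2), $g$ is induced by an isomorphism $f\colon X_1\to X_2$, whence $\phi_1=f^*\circ\phi_2$, i.e. $(X_1,\phi_1)$ and $(X_2,\phi_2)$ are isomorphic as $(\rho,T)$-polarized marked pairs. The essential feature is that $K(T)\subset T_{\IR}$ is a single cone contained in both $\phi_1^{-1}(\mathcal{K}_{X_1})$ and $\phi_2^{-1}(\mathcal{K}_{X_2})$; had we worked with chambers inside $L_{\IR}$ these two pulled-back K\"ahler cones might be disjoint and the argument would break down.

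For surjectivity, fix $\omega\in\Omega$. By Theorem \ref{surjectivity}(\ref{surjectivity-ii}) there is a $(\rho,T)$-polarized $(X,\phi)\in\mathcal{M}_T^{\rho,\xi}$ with period $\omega$ and automorphism $\sigma$. Pulling the stable invariant K\"ahler cone $\tilde{\mathcal{K}}_X^{\sigma}$ (a chamber of \eqref{invdec}) back by $\phi$ and using $\phi(\Delta(T))=\Delta_{\sigma}(X)$ together with $\phi(C_T)=\mathcal{C}_X^{\sigma}$, I obtain a chamber of \eqref{decomp-C-T}; after replacing the marking (the chamber $K(T)$ being fixed only up to the action of the reflection group acting on \eqref{decomp-C-T}, whose relevant elements extend to monodromy operators of $L$ commuting with $\rho(\bar\sigma)$, as in \cite{Joumaah}), I may assume $\tilde{\mathcal{K}}_X^{\sigma}=\phi(K(T))$. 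Since $\NS(X)=\phi(L^{1,1}(\omega))$ depends only on $\omega$, one checks that $\phi^{-1}\big(\Delta(X)\cap\phi(\Delta'(L))\big)=\{\delta\in\Delta'(L)\mid\delta\perp\omega\}$; the hypothesis $\omega\notin\Delta'$ then says that no such $\delta$ has $\delta^{\perp}$ meeting $K(T)$, so $\phi(K(T))$ is itself a single chamber of $\phi(K(T))\setminus\bigcup_{\delta\in\Delta(X)\cap\phi(\Delta'(L))}\delta^{\perp}$. By the correspondence between the chambers of this decomposition and the birational models $(Y,\eta)$ in $\mathcal{P}^{-1}(\omega)$ with $\mathcal{K}_Y^{\sigma}\subset\eta(K(T))$, there is such a model with $\mathcal{K}_Y^{\sigma}=\eta(K(T))$; arguing exactly as in the second case of the proof of Theorem \ref{surjectivity}, $(Y,\eta)$ is still $(\rho,T)$-polarized with period $\omega$, and by construction it is $K(T)$-general. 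Hence $\omega\in\mathcal{P}(\mathcal{M}_{K(T)}^{\rho,\xi})$.

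I expect the surjectivity step to be the main obstacle. Identifying the $T$-chamber that a priori contains the invariant K\"ahler cone of $X$ with the fixed chamber $K(T)$, and checking that the $(\rho,T)$-polarization survives the ensuing birational modification, are precisely where one needs the full Amerik--Verbitsky description of the K\"ahler cone (Theorem \ref{wallsMBM}), Markman's Torelli theorem, and the chamber combinatorics in $T_{\IR}$ developed in \cite{Joumaah}. By contrast, once $K(T)$-generality is in hand the injectivity is essentially immediate.
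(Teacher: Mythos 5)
Your treatment of well-definedness and of injectivity is correct and essentially identical to the paper's: the image avoids $\Delta$ by Theorem \ref{surjectivity}(\ref{surjectivity-i}), avoids $\Delta'$ because a class $\delta\in\Delta'(K(T))$ orthogonal to the period would give an MBM class whose orthogonal meets $\phi(K(T))=\mathcal{K}_X^{\sigma}\subset\mathcal{K}_X$ (your appeal to Remark \ref{utile}(2) is a slight, valid variant of the paper's ``$\mathcal{K}_X^{\sigma}\subsetneq\phi(K(T))$'' phrasing), and injectivity follows from $K(T)$-generality exactly as in the paper via Theorem \ref{HTT}(2).

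The surjectivity argument, however, contains a genuine gap: the step ``after replacing the marking \dots I may assume $\tilde{\mathcal{K}}_X^{\sigma}=\phi(K(T))$''. The chamber $K(T)\subset T_{\IR}$ is \emph{fixed} data of the theorem, and to move the chamber of \eqref{decomp-C-T} containing $\phi^{-1}(\mathcal{K}_X^{\sigma})$ onto $K(T)$ you would need a monodromy operator of $L$ that preserves $T$ (necessarily not pointwise), commutes with $\rho(\bar\sigma)$, acts trivially on the period, and maps one chamber to the other. Such operators do not exist in general: reflections in MBM classes of $T$ need not be integral isometries of $L$, composing the marking with one destroys the condition $\phi_{|T}=\iota$ in the definition of a $(\rho,T)$-polarization, and \S 6.2 of the paper exhibits two chambers $K_1,K_2$ of \eqref{decomp-C-T} whose associated moduli spaces $\mathcal{M}_{K_1}^{\rho,\xi}$ and $\mathcal{M}_{K_2}^{\rho,\xi}$ parametrize non-isomorphic families (natural automorphisms on $\Sigma^{[2]}$ versus Fano varieties of the cubics $V_2$, with non-isomorphic fixed loci) -- which would be impossible if the chambers were always interchangeable in this way. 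The correct move, which is what the paper does, keeps the marking on $T$ untouched and instead changes the \emph{manifold}: if $\mathcal{K}_X^{\sigma}\cap\phi(K(T))=\emptyset$, pick a K\"ahler--Weyl chamber $\mathcal{K}'$ of $\mathcal{C}_X$ with $\mathcal{K}'\cap\mathcal{C}_X^{\sigma}\subset\phi(K(T))$ and pass, via Markman's Torelli theorem as in the second case of the proof of Theorem \ref{surjectivity}, to the birational model $(X',\phi')$ whose K\"ahler cone is $\mathcal{K}'$; then $\mathcal{K}_{X'}^{\sigma'}\subset\phi'(K(T))$, and the absence of $\Delta'$-walls forces equality. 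Your closing appeal to ``the correspondence between chambers and birational models'' is essentially this mechanism (it is Proposition \ref{Delta'}, proved later from the same ingredients), so the right idea is present in your write-up, but as stated the argument routes through an unjustified and in general false normalization of the marking.
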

\begin{proof}
Given a $(X,\phi)\in\mathcal{M}_{K(T)}^{\rho,\xi}$, where $\phi$ denotes the marking, let $\omega:=\phi^{-1}(H^{2,0}(X))$ be its period. By Theorem \ref{surjectivity} (\ref{surjectivity-i}), we know that $ \omega\notin\Delta$. If we had $\omega\in H_{\delta}$ for a $\delta \in \Delta'(K(T))$, then by definition of $\NS(X)$ as orthogonal complement of $H^{2,0}(X)$ in $H^2(X,\IZ)$, we would get $\phi(\delta)\in\NS(X)$. But then $\phi(\delta)$ is a MBM class on $X$ by \cite[Theorem 2.16]{AmerikVerbitskyMK}. By construction, we would have $\phi(\delta)^{\perp}\cap \phi(K(T))\neq\emptyset$ and this immediately implies that $\mathcal{K}_X^{\sigma}\subsetneq \phi(K(T))$, against the assumption that $(X,\phi)$ is $K(T)$-general. Hence $ \omega\in\Omega$.

Given $\omega\in\Omega$, by Theorem \ref{surjectivity}, there exists $(X,\phi)\in\mathcal{M}_T^{\rho,\xi}$ such that its period is $\omega$; we want to show that there exists exactly one $K(T)$-general marked pair $(X',\phi')$ inside $\mathcal{P}^{-1}(\omega)$. Let $\sigma$ be the automorphism on $X$ that induces $\phi\circ\rho(\bar{\sigma})\circ\phi^{-1}$ on $H^2(X,\IZ)$. 
Either $\mathcal{K}_X^{\sigma}\subset \phi(K(T))$ or $\mathcal{K}_X^{\sigma}\cap \phi(K(T))=\emptyset$.

In the first case, suppose that we have $\mathcal{K}_X^{\sigma}\subsetneq \phi(K(T))$: this means that there exists a wall $\delta\in\Delta(X)\setminus\Delta_{\sigma}(X)$ such that $\delta^{\perp}\cap \phi(K(T))\neq\emptyset$. This implies that $\delta\in \phi(\Delta'(K(T)))\cap\Delta(X)$, against the assumption that $\omega \notin\Delta'$. Hence $(X,\phi)$ is $K(T)$-general.

Otherwise, if $\mathcal{K}_X^{\sigma}\cap \phi(K(T))=\emptyset$, let $\mathcal{K}'$ be a chamber of $\mathcal{C}_X\setminus \bigcup_{\delta\in \Delta(X)}\delta^{\perp}$ such that $\mathcal{K}'\cap\mathcal{C}_X^{\sigma}\subset\phi(K(T))$. As in the proof of Theorem \ref{surjectivity}, let $(X',\phi')\in \mathcal{M}_{T}^{\rho,\xi}$ be the birational model of $(X,\phi)$ such that $\tau(f^*(\mathcal{K}_{X'}))=\mathcal{K}'$ for $\tau$ a Hodge monodromy on $X$ and $f:X\dashrightarrow X'$ a birational map, moreover $\phi':=g^{-1}\circ\phi$ where $g=\tau\circ f^*$  and the non-symplectic automorphism on $X'$ is $\sigma'=g^{-1}\circ \sigma\circ g$. Then by construction  $\mathcal{K}_{X'}^{\sigma'}\subset \phi'(K(T))$ and we can repeat the above proof.

Suppose now that $\mathcal{P}(X_1,\phi_1)=\mathcal{P}(X_2,\phi_2)=\omega$ for two $K(T)$-general pairs $(X_1,\phi_1)$ and $ (X_2,\phi_2)$, and consider $f=\phi_2\circ\phi_1^{-1}$: it is a Hodge isometry and a parallel transport operator, by \cite[Theorem 3.2]{MarkmanTorelli}. Moreover, $f(\mathcal{K}_{X_1}^{\sigma_1})=f(\phi_1(K(T)))=\phi_2(K(T))=\mathcal{K}_{X_2}^{\sigma_2}$, where $\sigma_1$ and $\sigma_2$ denote the non-symplectic automorphisms on $X_1$ and $X_2$ respectively. We have shown that $f$ sends a K\"ahler class to a K\"ahler class so by Theorem \ref{HTT} we have that $f$ is induced by an isomorphism $\overline{f}:X_1\ra X_2$.

\end{proof}

\section{Complex ball quotients}\label{arithmetic-quot}
We study now how the period point $\cP(X,\phi)$ depends on the marking. By taking a monodromy operator $h\in \Mo^2(L)$ which is  the identity on $T$,  i.e. an element of $\lbrace g\in\Mo^2(L)|g_{|T}=\id_T\rbrace$, we get another marking $\phi\circ h^{-1}$ of the $T$-polarized \ihskn $(X,\iota)$. 

\begin{defi}
Let $(X_1, \eta_1)$ and $(X_2,\eta_2)$ be two $(\rho, T)$-polarized marked IHS manifolds and let $\sigma_i$ be a generator of $G\subset \Aut(X_i)$, for $i=1,2$.
 A {\it $(\rho, T)$-polarized parallel transport operator} is an isometry $f:H^2(X_1,\IZ)\ra H^2(X_2,\IZ)$ such that there exists a smooth and proper family $p:\mathcal{X}\ra D$, isomorphisms $\psi_i:X_i\ra\mathcal{X}_i$ and a path $\gamma:[0,1]\ra B$ with $\gamma(0)=b_1$, $\gamma(1)=b_2$, such that parallel transport in $R^2\pi_*\IZ$ along $\gamma$ induces $\tilde{f}:=(\psi_2)_*\circ f\circ(\psi_1)^*$, and a holomorphic map $F:\mathcal{X}\ra\mathcal{X}$ such that $F_b$ is an automorphism for all $b\in B$, $F_{b_i}$ induces $\sigma_i^* $ and $F_{b_1}^*=\tilde{f}^{-1}\circ F_{b_2}^*\circ \tilde{f}$.
 
 In particular, a {\it $(\rho, T)$-polarized monodromy operator} is a monodromy operator on $X$ satisfying $F_{b_1}^*\circ \tilde{f}=\tilde{f}\circ F_{b_1}^*$.
 \end{defi}

Let $\Gamma_T$ be  $\alpha(\lbrace g\in\Mo^2(L)|g_{|T}=\id_T\rbrace)$, with $\alpha: O(L)\ra O(S)$ the restriction map. Let this group act on  $\Omega_T^{\rho,\xi}$; the stabilizer of $\Omega_T^{\rho,\xi}$ is equal to the image $\Gamma_{T}^{\rho,\xi}\subset O(S)$, via the restriction map $\alpha$, of the group of $(\rho,T)$-polarized monodromy operators
$$
\Mo^2(T,\rho)=\lbrace g\in\Mo^2(L)|g_{|T}=\id_T,\ g\circ \rho(\bar\sigma)=\rho(\bar\sigma)\circ g\rbrace\subset O(L).
$$

\begin{prop}\label{equivariance}
 The group $\Mo^2(T,\rho)$ acts on the set $\mathcal{M}_{K(T)}^{\rho,\xi}$. Moreover, the restriction of the period $\mathcal{P}:\mathcal{M}_{K(T)}^{\rho,\xi}\ra \Omega$ is equivariant with respect to the action of $\Mo^2(T,\rho)$ and $\Gamma_{T}^{\rho,\xi}$, so it induces a bijection between the quotients.
\end{prop}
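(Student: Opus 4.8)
The plan is to verify the three assertions of the proposition in turn: first that $\Mo^2(T,\rho)$ acts on $\mathcal{M}_{K(T)}^{\rho,\xi}$, then that $\mathcal{P}$ is equivariant, and finally that the induced map on quotients is a bijection.

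\textbf{The action.} Given $g\in\Mo^2(T,\rho)$ and $(X,\phi)\in\mathcal{M}_{K(T)}^{\rho,\xi}$, I would set $g\cdot(X,\phi):=(X,\phi\circ g^{-1})$. First I check this lands back in $\mathcal{M}_L^+$: since $g\in\Mo^2(L)$, the new marking $\phi\circ g^{-1}$ still belongs to the same connected component. Next I check it is still $(\rho,T)$-polarized: because $g_{|T}=\id_T$, the restriction of the new marking to $T$ is unchanged, so the embedding $\iota$ is preserved; and because $g$ commutes with $\rho(\bar\sigma)$, the isometry $(\phi\circ g^{-1})\circ\rho(\bar\sigma)\circ(\phi\circ g^{-1})^{-1}=\phi\circ g^{-1}\circ\rho(\bar\sigma)\circ g\circ\phi^{-1}=\phi\circ\rho(\bar\sigma)\circ\phi^{-1}=\sigma^*$ is still induced by the \emph{same} automorphism $\sigma\in\Aut(X)$. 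In particular the period transforms as $\phi^{-1}(H^{2,0}(X))\mapsto g(\phi^{-1}(H^{2,0}(X)))$ and the eigenvalue $\xi$ is unchanged (again because $g$ commutes with $\rho(\bar\sigma)$), so the new pair lies in $\mathcal{M}_T^{\rho,\xi}$. Finally, $K(T)$-generality is preserved: the invariant K\"ahler cone $\mathcal{K}_X^\sigma\subset H^2(X,\IR)$ is an intrinsic object of $(X,\sigma)$, and since $\sigma$ is unchanged while the marking changes by $g$ with $g_{|T}=\id_T$, the condition $\phi(K(T))=\mathcal{K}_X^\sigma$ reads $(\phi\circ g^{-1})(K(T))=\phi(g^{-1}(K(T)))$; here I must use that $g$ acts trivially on $T_\IR\supset K(T)$, hence $g^{-1}(K(T))=K(T)$, so the condition is literally the same. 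That this is a group action (compatible with composition) is then immediate.

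\textbf{Equivariance.} By definition of $\Gamma_T^{\rho,\xi}$ as the image under $\alpha:O(L)\to O(S)$ of $\Mo^2(T,\rho)$, and since the period $\mathcal{P}(X,\phi)=\phi^{-1}(H^{2,0}(X))\in\IP(S(\xi))$ lives in the $S$-part, the computation above shows $\mathcal{P}(g\cdot(X,\phi))=g(\mathcal{P}(X,\phi))=\alpha(g)(\mathcal{P}(X,\phi))$. Thus $\mathcal{P}$ intertwines the $\Mo^2(T,\rho)$-action on $\mathcal{M}_{K(T)}^{\rho,\xi}$ with the $\Gamma_T^{\rho,\xi}$-action on $\Omega$. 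One should note here that $\Gamma_T^{\rho,\xi}$ does preserve $\Omega=\Omega_T^{\rho,\xi}\setminus(\Delta\cup\Delta')$ and not merely $\Omega_T^{\rho,\xi}$: an element of $\Mo^2(T,\rho)$ permutes the set $\Delta(S)$ of wall classes in $S$ (monodromy operators map MBM classes to MBM classes by \cite[Theorem 2.16]{AmerikVerbitskyMK}), hence preserves $\Delta$; and it fixes $T_\IR$ pointwise hence preserves $K(T)$ and the set $\Delta'(K(T))$, hence preserves $\Delta'$.

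\textbf{Bijectivity on quotients.} This is the step where the main work lies, and it is where I expect the chief obstacle. By Theorem \ref{injectivity}, $\mathcal{P}:\mathcal{M}_{K(T)}^{\rho,\xi}\to\Omega$ is already a bijection; equivariance then forces the induced map $\overline{\mathcal{P}}:\mathcal{M}_{K(T)}^{\rho,\xi}/\Mo^2(T,\rho)\to\Omega/\Gamma_T^{\rho,\xi}$ to be a well-defined bijection \emph{provided} the two group actions have the same orbits under $\mathcal{P}$, i.e. provided $\mathcal{P}(x_1)$ and $\mathcal{P}(x_2)$ lie in the same $\Gamma_T^{\rho,\xi}$-orbit if and only if $x_1$ and $x_2$ lie in the same $\Mo^2(T,\rho)$-orbit. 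The ``if'' direction is immediate from equivariance. For ``only if'': suppose $\mathcal{P}(x_2)=\bar g\cdot\mathcal{P}(x_1)$ for some $\bar g\in\Gamma_T^{\rho,\xi}$, and lift $\bar g$ to $g\in\Mo^2(T,\rho)$. Then $\mathcal{P}(g\cdot x_1)=\bar g\cdot\mathcal{P}(x_1)=\mathcal{P}(x_2)$, and since $\mathcal{P}$ is injective on $\mathcal{M}_{K(T)}^{\rho,\xi}$ we conclude $g\cdot x_1=x_2$, so $x_1,x_2$ are in the same orbit. The subtle point, and the reason $\Gamma_T^{\rho,\xi}$ rather than all of $\Gamma_T$ appears, is precisely that one needs the lift $g$ to commute with $\rho(\bar\sigma)$ in order for $g\cdot x_1$ to be defined as an element of $\mathcal{M}_{K(T)}^{\rho,\xi}$; this is guaranteed by the identification of $\Gamma_T^{\rho,\xi}$ as the image of $\Mo^2(T,\rho)$ under $\alpha$, together with the fact (to be checked, using that $\Gamma_T^{\rho,\xi}$ was defined as exactly this stabilizer) that every element of $\Gamma_T^{\rho,\xi}$ does admit such a lift. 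Assembling these observations gives that $\overline{\mathcal{P}}$ is both injective and surjective, completing the proof.
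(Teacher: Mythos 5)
Your proof is correct and takes essentially the same approach as the paper: the action is defined by $(X,\phi)\mapsto(X,\phi\circ g^{-1})$, and the same three verifications (restriction to $T$, conjugation of $\rho(\bar\sigma)$, invariance of $K(T)$) are carried out. You additionally spell out points the paper dismisses as obvious --- that $\Gamma_T^{\rho,\xi}$ preserves $\Omega$ and that the bijection on quotients follows from Theorem~\ref{injectivity} together with the liftability of elements of $\Gamma_T^{\rho,\xi}$ to $\Mo^2(T,\rho)$ --- but the substance is identical.
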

\begin{proof}
 Given $(X,\phi)\in\mathcal{M}_{K(T)}^{\rho,\xi}$, we want to show that $(X,\phi\circ g^{-1})\in\mathcal{M}_{K(T)}^{\rho,\xi}$. Indeed, $(\phi\circ g^{-1})_{|T}=\iota$, $\phi\circ g^{-1}\circ\rho(\bar{\sigma})\circ g\circ\phi^{-1}=\phi\circ g^{-1}\circ  g\circ\rho(\bar{\sigma})\circ \phi^{-1}=\bar{\sigma}^*$ and $\phi(g^{-1}(K(T)))=\phi(K(T))$ because $g^{-1}_{|T}=\id_T$.
 
 The equivariance is obvious.
\end{proof}

\begin{lemma}\label{arithmeticGroup}
 The group $\Gamma_{T}^{\rho,\xi}$ is an arithmetic group.
\end{lemma}
\begin{proof}
We remark first that $\Gamma_{T}^{\rho,\xi}=\lbrace g\in\Gamma_T| g\circ \rho(\bar\sigma)=\rho(\bar\sigma)\circ g\rbrace\subset O(S)$.
The group $Z=\lbrace g\in O(S,\IQ)| g\circ\rho(\bar\sigma)=\rho(\bar\sigma)\circ g\rbrace$ is a linear algebraic group over $\IQ$, because the condition $g\circ\rho(\bar\sigma)=\rho(\bar\sigma)\circ g$ gives polynomial equations in the coefficients of the matrix associated to $g$. Hence $Z(\IZ)=Z\cap O(S)$ is an arithmetic group. We know from \cite[Proposition 3.5]{C4} and its generalization in \cite{C5} that $\Gamma_T$ is of finite index inside $O(S)$ and this implies that $\Gamma_{T}^{\rho,\xi}=Z(\IZ)\cap \Gamma_T$ is of finite index inside $O(S)\cap Z(\IZ)=Z(\IZ)$, thus it is an arithmetic group.
\end{proof}

A straightforward generalization of \cite[Lemma 7.7]{Joumaah} gives the following result.

\begin{lemma}\label{loc-finiteness}
 The collections of hyperplanes $\Delta$ and $\Delta'$ in $\Omega_L\cap\IP(S)$ are locally finite.
\end{lemma}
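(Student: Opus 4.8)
The plan is to reduce the statement to a local finiteness property for a single family of reflection hyperplanes associated to negative norm vectors in a lattice, which is a standard fact, and then show that both $\Delta$ and $\Delta'$ arise in this way. First I would recall that $\Delta$ is the union of the hyperplanes $H_\delta \cap \Omega_L \cap \IP(S)$ for $\delta$ ranging over $\Delta(S) = \Delta(L) \cap S$, and similarly $\Delta'$ is the union over $\delta \in \Delta'(K(T))$; in both cases the relevant vectors are nonzero elements of $S^\vee$ (or of $L^\vee$, whose projection to $S$ matters) of bounded negative square. Indeed, by \cite[Theorem 1.20 and Corollary 1.21]{AmerikVerbitsky} the MBM property of a class $\delta$ on a manifold of $K3^{[n]}$-type depends only on its square $q(\delta)$ and its divisibility in the lattice $L$, and both of these take only finitely many values for MBM classes. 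Hence there is an integer $N$, depending only on the deformation type, such that every $\delta$ contributing to $\Delta$ or $\Delta'$ satisfies $-N \le q(\delta) < 0$ (after rescaling to an integral primitive vector in the appropriate lattice).

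Next I would invoke the classical local finiteness statement: given a lattice $M$ of signature $(2, \mathrm{rk}\,M - 2)$ (which is the case for $S$, or for $S(\xi)$ seen as a real quadratic space of signature $(2, \dim S(\xi) \cdot 2 - 2)$ with the real part of the Hermitian form — here one uses that $\Omega_T^{\rho,\xi}$ sits inside $\Omega_L \cap \IP(S)$ as stated in Lemma~\ref{loc-finiteness}), the collection of hyperplanes $\delta^\perp$ for $\delta \in M$ with $-N \le q(\delta) < 0$ is locally finite on the domain $\{x \in \IP(M_\IC) : q(x) = 0, q(x + \bar x) > 0\}$. This is \cite[Proposition 5.7]{Huybrechts} type of statement, or can be extracted from the proof that the relevant period domains are locally symmetric of orthogonal type; the point is that near any compact subset of the domain, only finitely many such $\delta^\perp$ meet it, because the vectors $\delta$ of bounded negative square whose orthogonal hyperplane passes close to a fixed positive-definite two-plane form a finite set (a bounded-square vector cannot be too ``short'' in the positive directions without being long in the negative directions, and lattice vectors of bounded length in a fixed direction are finite in number). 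I would spell out this last implication: fix a point $\omega_0$ in the domain and a small neighborhood; the positive two-plane $P_0$ spanned by $\mathrm{Re}\,\omega_0, \mathrm{Im}\,\omega_0$ has the property that $\delta^\perp$ passing near $\omega_0$ forces the orthogonal projection of $\delta$ to $P_0$ to be small, hence the projection of $\delta$ to $P_0^\perp$ (negative definite) has bounded square, giving only finitely many possibilities.

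Then I would assemble: $\Delta(S)$ is contained in the set of $\delta \in S$ with $-N \le q(\delta) < 0$, so $\Delta$ is a sub-collection of a locally finite collection, hence locally finite. For $\Delta'$, the classes $\delta \in \Delta'(K(T)) \subset \Delta'(L)$ are not in $S$, but by Lemma~\ref{characterization-Delta'}(3) the hyperplane $\delta^\perp$ meets $\Omega_L \cap \IP(S)$ precisely because $S \cap \delta^\perp$ has signature $(2, \mathrm{rk}\,S - 3)$; equivalently, the orthogonal projection $\delta_S$ of $\delta$ to $S^\vee$ is a nonzero vector, and $\delta^\perp \cap \IP(S) = \delta_S^\perp \cap \IP(S)$. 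Since $q(\delta_S) \le q(\delta) < 0$ and (by the $\Delta'(L)$ condition) $q(\delta_S)$ is bounded below in terms of $q(\delta)$ and the discriminant of $T$, the vectors $\delta_S$ again have bounded negative square, and we conclude as before. Combining with the fact that $\Omega_T^{\rho,\xi} \subset \Omega_L \cap \IP(S)$ and that restricting a locally finite collection of hyperplanes to a subvariety stays locally finite, both $\Delta$ and $\Delta'$ are locally finite in $\Omega_L \cap \IP(S)$.

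The main obstacle I anticipate is the careful bookkeeping of divisibilities and squares for $\Delta'$: one must make sure that when $\delta \in \Delta'(L)$ is written as $\delta = \nu_T + \nu_S$ with $q(\nu_T) < 0$, $q(\nu_S) < 0$, the $S$-component $\nu_S$ lies in $\frac{1}{e} S$ for a bounded denominator $e$ (controlled by $\discr(T)$, since $L / (T \oplus S)$ is finite of order dividing $\discr(T) = \discr(S)$) and has square bounded away from zero and below, so that only finitely many hyperplanes $\nu_S^\perp$ occur near a given point. This is exactly the content of the ``straightforward generalization of \cite[Lemma 7.7]{Joumaah}'' alluded to, where Joumaah treated the involution case $p = 2$; the generalization to $p \ge 3$ is purely lattice-theoretic and uses only that $S$ has signature $(2, \mathrm{rk}\,S - 2)$ and that $\Omega_T^{\rho,\xi}$ is cut out inside $\IP(S(\xi))$, with $S(\xi) \subset S_\IC$ a complex subspace on which the real form has signature $(2, *)$.
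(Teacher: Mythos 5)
Your proof is correct and follows essentially the same route as the paper's: both reduce $\Delta'$ to the hyperplanes $H_{\delta_S}$ via the decomposition $d\delta=\delta_T+\delta_S$ with $d=|L/(T\oplus S)|$, use the boundedness of $q(\delta)$ for MBM classes together with $q(\delta_T)<0$ to bound $q(\delta_S)$, and then invoke the standard local finiteness of hyperplanes orthogonal to lattice vectors of bounded negative square in a signature-$(2,\ast)$ period domain (the paper phrases this last step via the properly discontinuous action of $\Gamma_T$ on the hyperplanes, you via the direct projection-to-the-positive-two-plane count). One small slip: since $q(\delta_T)<0$ and $T\perp S$, the correct inequality is $q(\delta_S)=q(\delta)-q(\delta_T)>q(\delta)$ rather than $q(\delta_S)\le q(\delta)$, but this only strengthens the lower bound you actually need.
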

\begin{proof}
 The local finiteness of $\Delta$ is proven in  \cite[Lemma 7.7]{Joumaah}, where it is also proven for $\Delta'$ in the case in which $T$ is the invariant sublattice of a non-symplectic involution. That proof can be easily generalized in the following way: given $\delta\in \Delta'(L)$, remark that there exists an integer $d$ such that $d\delta=\delta_T+\delta_S$ with $\delta_T\in T$ and $\delta_S\in S$. Indeed, $T\oplus S$ is a sublattice of finite index in $L$, and it is enough to take $d:=|L/(T\oplus S)|$ to obtain such a decomposition (in particular, in our case $d=p^a$ by \cite[Lemma 4.3]{BNWS}). Then, by Lemma \ref{characterization-Delta'} we deduce $q(\delta_T)<0$ and $q(\delta_S)<0$, but we know that there are only finitely many possible values for $q(\delta)<0$ for an MBM class, so this holds as well for $\delta_S$. 
 
 It is now enough to remark that $H_{\delta}=H_{\delta_S}$ in  $\Omega_L\cap\IP(S)$ and that $\Gamma_T$ acts on the set of hyperplanes $H_{\delta_S}$ with a proper and discontinuous action, so that every orbit is closed, and hence locally finite.
\end{proof}

\begin{cor}\label{algebraicModuli}
 The complex ball quotient $\mathcal{M}_{K(T)}^{\rho,\xi}/\Mo^2(T,\rho)\cong \Omega/\Gamma_{T}^{\rho,\xi}$ parametrizes isomorphism classes of $K(T)$-general $(\rho,T)$-polarized \ihskn, and it is a quasi-projective variety of dimension $\dim S(\xi)-1$.
\end{cor}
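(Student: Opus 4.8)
The plan is to assemble Proposition \ref{equivariance}, Lemma \ref{arithmeticGroup} and Lemma \ref{loc-finiteness} together with the theorem of Baily and Borel. If $\dim S(\xi)=1$ then $\Omega_T^{\rho,\xi}$ is a single point and the statement reduces to the finiteness remark above, so I may assume $\dim S(\xi)\geq 2$; then $\Omega_T^{\rho,\xi}$ is a complex ball of dimension $\dim S(\xi)-1$ and $\Omega=\Omega_T^{\rho,\xi}\setminus(\Delta\cup\Delta')$. By Proposition \ref{equivariance} the period map induces a bijection $\mathcal{M}_{K(T)}^{\rho,\xi}/\Mo^2(T,\rho)\to\Omega/\Gamma_T^{\rho,\xi}$, so once $\Omega/\Gamma_T^{\rho,\xi}$ is given a structure of quasi-projective variety, the same structure is transported to the left-hand side through this bijection.

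First I would make precise the moduli interpretation. An element $g\in\Mo^2(T,\rho)$ acts on a marked pair $(X,\phi)$ by $(X,\phi)\mapsto(X,\phi\circ g^{-1})$, and since $g_{|T}=\id_T$ and $g$ commutes with $\rho(\bar\sigma)$, every representative of a given orbit has the same underlying manifold $X$, the same polarization $\iota=\phi_{|T}$ and the same non-symplectic automorphism $\sigma$ (the one inducing $\phi\circ\rho(\bar\sigma)\circ\phi^{-1}$); thus each orbit determines an isomorphism class of $K(T)$-general $(\rho,T)$-polarized \ihskn (forgetting the marking but retaining $\iota$ and $\langle\sigma\rangle$). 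Conversely, if $f\colon X_1\to X_2$ is an isomorphism with $f^{\ast}\circ\iota_2=\iota_1$ and $f\circ\sigma_1=\sigma_2\circ f$, then $g:=\phi_2^{-1}\circ(f^{-1})^{\ast}\circ\phi_1$ restricts to $\id_T$, commutes with $\rho(\bar\sigma)$, and is a monodromy operator because an isomorphism of \ihskn induces a parallel transport operator and $(X_1,\phi_1),(X_2,\phi_2)$ lie in the same connected component $\mathcal{M}_L^+$; hence $g\in\Mo^2(T,\rho)$ and the two marked pairs lie in one orbit. Finally, every $K(T)$-general $(\rho,T)$-polarized manifold admits a marking in $\mathcal{M}_L^+$ — by the description of the invariant lattice and the classification results recalled from \cite{BCS} — and therefore arises from a point of $\mathcal{M}_{K(T)}^{\rho,\xi}$; this yields the bijection with isomorphism classes.

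Next I would establish quasi-projectivity. By Lemma \ref{arithmeticGroup} the group $\Gamma_T^{\rho,\xi}$ is arithmetic, and since it commutes with $\rho(\bar\sigma)$ it preserves the eigenspace $S(\xi)$ and acts properly discontinuously on the complex ball $\Omega_T^{\rho,\xi}$; by the Baily--Borel theorem the quotient $\Omega_T^{\rho,\xi}/\Gamma_T^{\rho,\xi}$ is a normal quasi-projective variety of dimension $\dim S(\xi)-1$, with a canonical projective compactification. It remains to check that $\Omega/\Gamma_T^{\rho,\xi}$ is Zariski open in it, i.e.\ that the image of $\Delta\cup\Delta'$ is a closed algebraic subvariety. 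The arrangement $\Delta\cup\Delta'$ is $\Gamma_T^{\rho,\xi}$-invariant, because $\Gamma_T^{\rho,\xi}\subset\Mo^2(L)$ preserves the sets $\Delta(S)$ and $\Delta'(K(T))$ (monodromy invariance of MBM classes, and $\Gamma_T^{\rho,\xi}$ fixes $T$ pointwise), and it is locally finite by Lemma \ref{loc-finiteness}, hence closed; moreover, as in the proof of that lemma, each contributing $\delta$ gives a hyperplane $H_{\delta}=H_{\delta_S}$ with $\delta_S\in S$ of bounded negative square, so there are only finitely many $\Gamma_T^{\rho,\xi}$-orbits of such hyperplanes. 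Thus the image of $\Delta\cup\Delta'$ is a finite union of quotients of the complex sub-balls $H_{\delta_S}\cap\Omega_T^{\rho,\xi}$ by the corresponding arithmetic stabilizers, each of which is again quasi-projective by Baily--Borel and maps to a closed subvariety of the compactification; hence the image is Zariski closed and $\Omega/\Gamma_T^{\rho,\xi}$ is quasi-projective of dimension $\dim S(\xi)-1$.

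The main obstacle is the moduli interpretation in the second step: one has to track carefully the interaction between markings, the fixed connected component $\mathcal{M}_L^+$ and the monodromy group — in particular the existence of a marking realizing a given polarized manifold inside $\mathcal{M}_L^+$ and the fact that two such markings of one manifold differ by an element of $\Mo^2(T,\rho)$. By contrast, the quasi-projectivity, although it rests on the external input of the Baily--Borel theorem (applied once to $\Omega_T^{\rho,\xi}$ and once more to the hyperplane sub-balls), is otherwise routine once the $\Gamma_T^{\rho,\xi}$-invariance and the finiteness of orbits of the arrangement are in hand.
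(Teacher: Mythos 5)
Your proposal is correct and follows essentially the same route as the paper: the bijection of quotients comes from Theorem \ref{injectivity} together with Proposition \ref{equivariance}, and quasi-projectivity from Lemma \ref{arithmeticGroup}, the local finiteness of $\Delta\cup\Delta'$ (Lemma \ref{loc-finiteness}) and the Baily--Borel theorem. The extra details you supply — the explicit identification of $\Mo^2(T,\rho)$-orbits with isomorphism classes, and the finiteness of $\Gamma_T^{\rho,\xi}$-orbits of hyperplanes ensuring that the image of the arrangement is Zariski closed — are sound elaborations of steps the paper leaves implicit.
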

\begin{proof}
The first part of the statement is a direct consequence of Theorem \ref{injectivity} and Proposition \ref{equivariance}.

The sets of hyperplanes $\Delta$ and $\Delta':=\bigcup_{\delta\in \Delta'(K(T))} H_{\delta}$ are locally finite in the period domain $\Omega_{T^\perp}$ of $T$-polarized \ihskn by Lemma \ref{loc-finiteness}. Now $\Omega_T^{\rho, \xi}$ is contained in $\Omega_{T^\perp}$ so that $\Delta$ and $\Delta'$ remain locally finite. Then by Proposition \ref{arithmeticGroup} and Baily-Borel's Theorem \cite{BailyBorel} the arithmetic quotient $\Omega/\Gamma_{T}^{\rho,\xi}$ is a quasi-projective variety of dimension $\dim S(\xi)-1$. 
\end{proof}

We have seen in Corollary \ref{iperpianiDeltaS} that, if $\mathcal{P}(X,\phi)\in\Delta$, there is no automorphism on $X$ of prime order $p$ with invariant sublattice $T$. The next statement explains what happens when the period belongs to $\Delta'$.

We need to recall the following notation: given a period point $\pi\in\Omega_L$, the group of monodromies which are isomorphisms of Hodge structures is the same for all marked pairs in $\mathcal{P}^{-1}(\pi)$; we denote it $\Mo^2_{\mathrm{Hdg}}(\pi)$.

\begin{prop}\label{Delta'}
If $\pi\in\Delta'\setminus\Delta$, each element $(X,\phi)$ of $\mathcal{P}^{-1}(\pi)$ such that $\phi(K(T))\cap \mathcal{K}_X\neq\emptyset$ has an automorphism of order $p$ with invariant sublattice $T$ but it is not $K(T)$-general. There is a bijection between $\Mo^2_{\mathrm{Hdg}}(\pi)$-orbits of elements $(Y,\eta)$ of $\mathcal{P}^{-1}(\pi)$ such that $\eta(K(T))\cap \mathcal{K}_Y\neq\emptyset$ and $\Mo^2_{\mathrm{Hdg}}(X)$-orbits of chambers of $U:=\phi(K(T))\setminus\bigcup_{\delta\in \Delta(X)\cap\phi(\Delta'(K(T)))}\delta^{\perp}$,
for any $(X,\phi)\in\mathcal{P}^{-1}(\pi)$.  
\end{prop}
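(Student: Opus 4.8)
The plan is to establish the three assertions in turn — existence of the order-$p$ automorphism for such an $(X,\phi)$, failure of $K(T)$-generality, and the bijection — the last by setting up a dictionary between the fibre $\mathcal{P}^{-1}(\pi)$ and the K\"ahler--Weyl chambers of $\mathcal{C}_X$.

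First I would fix $(X,\phi)\in\mathcal{P}^{-1}(\pi)$ with $\phi(K(T))\cap\mathcal{K}_X\neq\emptyset$ and set $\psi:=\phi\circ\rho(\bar\sigma)\circ\phi^{-1}\in O(H^2(X,\IZ))$. Exactly as in the proof of Theorem \ref{surjectivity}, $\psi$ is a Hodge isometry (it acts on $H^{2,0}(X)$ by $\xi$), it lies in $\Mo^2(X)$ by Remark \ref{rho-monodromy}, and its invariant sublattice is $\phi(L^{\rho(\bar\sigma)})=\phi(T)$. Since $\psi$ is the identity on $\phi(T)_\IR\supseteq\phi(K(T))$, the hypothesis furnishes a $\psi$-fixed K\"ahler class, so Theorem \ref{HTT}(2) gives $\sigma\in\Aut(X)$ with $\sigma^\ast=\psi$ and invariant sublattice $T$. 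To see that $(X,\phi)$ is not $K(T)$-general I would use $\pi\in\Delta'$: there is $\delta\in\Delta'(K(T))$ with $\pi\in H_\delta$, whence $\phi(\delta)\in\NS(X)$ and, by \cite[Theorem 2.16]{AmerikVerbitskyMK}, $\phi(\delta)\in\Delta(X)$. By Lemma \ref{characterization-Delta'} one has $\delta\notin T$, so $\sigma^\ast\phi(\delta)=\phi(\rho(\bar\sigma)\delta)\neq\phi(\delta)$, i.e. $\phi(\delta)\notin\Delta_\sigma(X)$; nevertheless $\delta^\perp$ meets the open cone $K(T)$, so $\phi(\delta)^\perp$ crosses the interior of $\phi(K(T))$. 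As $\mathcal{K}_X$ is a chamber of $\mathcal{C}_X\setminus\mathcal{H}$ (with $\mathcal{H}=\bigcup_{\gamma\in\Delta(X)}\gamma^\perp$), it lies on one side of $\phi(\delta)^\perp$, whence $\mathcal{K}_X^\sigma=\mathcal{K}_X\cap\phi(K(T))\subsetneq\phi(K(T))$.

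For the bijection I would first recall, from Theorem \ref{GTT} and \cite[Theorem 3.2, Lemma 5.12]{MarkmanTorelli}, that for any $(Y,\eta)\in\mathcal{P}^{-1}(\pi)$ the map $\phi\circ\eta^{-1}\colon H^2(Y,\IZ)\to H^2(X,\IZ)$ is a Hodge-isometric parallel transport operator, $Y$ is birational to $X$, and one may write $\phi\circ\eta^{-1}=g\circ f^\ast$ with $g\in\Mo^2(X)$ a Hodge isometry and $f\colon X\dashrightarrow Y$ birational; hence $\Phi(Y,\eta):=(\phi\circ\eta^{-1})(\mathcal{K}_Y)$ is a K\"ahler--Weyl chamber of $\mathcal{C}_X$, and $\Phi$ is a bijection of $\mathcal{P}^{-1}(\pi)$ onto the set of K\"ahler--Weyl chambers (injectivity and surjectivity both via Theorem \ref{HTT}, recovering a marking from data $(Y,g,f)$ and checking it lies in $\mathcal{M}_L^+$). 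Since $\phi\circ\eta^{-1}$ restricts to the identity of $T$ via $\iota$, one has $\Phi(Y,\eta)\cap\phi(K(T))=(\phi\circ\eta^{-1})\big(\mathcal{K}_Y\cap\eta(T)_\IR\big)$, so the $(Y,\eta)$ with $\eta(K(T))\cap\mathcal{K}_Y\neq\emptyset$ are exactly those whose K\"ahler--Weyl chamber meets $\phi(K(T))$ — a nonempty locus, since $\phi(K(T))$ is a nonempty open subset of $\mathcal{C}_X\cap\phi(T)_\IR$ and $\mathcal{H}$ is locally finite there by Lemma \ref{loc-finiteness}. The crucial step is then to check that $\mathcal{K}\mapsto\mathcal{K}\cap\phi(K(T))$ is a bijection from the K\"ahler--Weyl chambers meeting $\phi(K(T))$ onto the chambers of $U:=\phi(K(T))\setminus\bigcup_{\delta\in\Delta(X)\cap\phi(\Delta'(K(T)))}\delta^\perp$. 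For this I would show that the walls $\delta^\perp$, $\delta\in\Delta(X)$, crossing the interior of $\phi(K(T))$ are precisely those with $\delta\in\phi(\Delta'(K(T)))$: walls with $\phi^{-1}(\delta)\in S$ are impossible because $\pi\notin\Delta$ forces $\delta\notin\NS(X)$; walls with $\phi^{-1}(\delta)\in T$, equivalently $\delta\in\Delta_\sigma(X)=\phi(\Delta(T))$, avoid $K(T)$ by its very definition; and any remaining $\delta\in\Delta(X)$ whose wall meets $\phi(K(T))$ makes $\phi^{-1}(\delta)$ satisfy the three conditions of Lemma \ref{characterization-Delta'}(3) (one has $\pi\in\Omega_L\cap\IP(S)\cap\phi^{-1}(\delta)^\perp$ since $\delta\in\NS(X)$ and $\pi\in\Omega_T^{\rho,\xi}$), hence $\phi^{-1}(\delta)\in\Delta'(L)$ and, the wall meeting $K(T)$, $\phi^{-1}(\delta)\in\Delta'(K(T))$. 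Consequently a K\"ahler--Weyl chamber meeting $\phi(K(T))$ intersects it in a \emph{full} chamber of $U$, and conversely each chamber $C$ of $U$ is disjoint from $\mathcal{H}$, hence contained in a unique K\"ahler--Weyl chamber $\mathcal{K}_C$ with $\mathcal{K}_C\cap\phi(K(T))=C$; combined with $\Phi$ this yields the bijection between $\{(Y,\eta)\in\mathcal{P}^{-1}(\pi)\mid\eta(K(T))\cap\mathcal{K}_Y\neq\emptyset\}$ and the chambers of $U$, and the argument does not depend on the choice of base point $(X,\phi)\in\mathcal{P}^{-1}(\pi)$.

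Finally, to pass to orbits I would transport the $\Mo^2_{\mathrm{Hdg}}(X)$-action $\mathcal{K}\mapsto g(\mathcal{K})$ on K\"ahler--Weyl chambers through $\Phi$: it becomes the action $(Y,\eta)\mapsto(Y,\eta\circ(\phi^{-1}g\phi)^{-1})$ of $\Mo^2_{\mathrm{Hdg}}(\pi)$ on $\mathcal{P}^{-1}(\pi)$; since the $\Mo^2_{\mathrm{Hdg}}(X)$-orbits of K\"ahler--Weyl chambers are indexed by the birational models of $X$ (two chambers being conjugate iff the underlying models are isomorphic, by Theorem \ref{HTT}), and each chamber $C$ of $U$ selects such a chamber via $\mathcal{K}_C$, the bijection just obtained descends to the asserted bijection on orbits. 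The main obstacle is the geometric step of the third paragraph — verifying, through the case analysis using Lemma \ref{characterization-Delta'} together with the defining property of $K(T)$, that $\mathcal{K}\cap\phi(K(T))$ is neither empty (for a chamber genuinely meeting $\phi(K(T))$), nor a proper subset, nor a union, but exactly one chamber of $U$; the translation and matching of the two group actions on $\mathcal{P}^{-1}(\pi)$ and on chambers of $U$ is the second, more routine, point requiring care.
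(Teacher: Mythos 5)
Your proof is correct and follows essentially the same route as the paper's: the automorphism comes from the $\rho(\bar\sigma)$-fixed K\"ahler class as in Theorem \ref{surjectivity}, the failure of $K(T)$-generality from a wall of $\Delta'(K(T))$ crossing $\phi(K(T))$, and the bijection from the identity $U=\phi(K(T))\cap\bigl(\mathcal{C}_X\setminus\bigcup_{\delta\in\Delta(X)}\delta^{\perp}\bigr)$ combined with Markman's correspondence between points of $\mathcal{P}^{-1}(\pi)$ and K\"ahler--Weyl chambers (the paper phrases this via $\beta(Y,\eta)=[\mathcal{K}_Y^{\sigma_Y}]$, which coincides with your $\Phi(Y,\eta)\cap\phi(K(T))$). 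Your case analysis via Lemma \ref{characterization-Delta'} showing that the only walls of $\Delta(X)$ meeting $\phi(K(T))$ are those in $\phi(\Delta'(K(T)))$ is in fact spelled out more carefully than in the paper, which asserts it ``by definition''.
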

\begin{proof}
Let $(Y,\eta)\in\mathcal{P}^{-1}(\pi)$ be such that $\eta(K(T))\cap \mathcal{K}_Y\neq\emptyset$. It follows from the proof of Theorem \ref{surjectivity} that there is an automorphism $\sigma_Y$ of order $p$ with invariant sublattice $T$, since by assumption there is a K\"ahler class invariant under the action of $\eta\circ\rho(\bar{\sigma})\circ\eta^{-1}$. On the other hand, $\pi\in\Delta'\setminus\Delta$ implies that there is $\delta\in\Delta'(K(T))$ such that $\eta(\delta)\in \NS(Y)$. In particular, $\eta(\delta)^{\perp}\cap \eta(K(T))\neq\emptyset$ and there is more than one chamber in $U$; this immediately implies that $(Y,\eta)$ is not $K(T)$-general.

Given $(Y,\eta)\in\mathcal{P}^{-1}(\pi)/\Mo^2_{\mathrm{Hdg}}(\pi)$ such that $\eta(K(T))\cap \mathcal{K}_Y\neq\emptyset$, we define $\beta(Y,\eta)=[\mathcal{K}_Y^{\sigma_Y}]$, the equivalence class of $\mathcal{K}_Y^{\sigma_Y}$ with respect to the action of $\Mo^2_{\mathrm{Hdg}}(Y)$, and we will show that $\beta$ is the desired bijection. It is clearly well-defined.

Suppose that $\beta(Y,\eta)=\beta(Y',\eta')$, or equivalently $\eta'(\eta^{-1}(\mathcal{K}_Y^{\sigma_Y}))=\mathcal{K}_{Y'}^{\sigma_{Y'}}$. Since $\mathcal{P}(Y,\eta)=\mathcal{P}(Y',\eta')$, it follows from the global Torelli theorem \cite[Theorem 1.3]{MarkmanTorelli} that $\eta'\circ\eta^{-1}$ is induced by an isomorphism $f:Y'\ra Y$.

Let $(X,\phi)\in\mathcal{P}^{-1}(\pi)$ such that $\phi(K(T))\cap \mathcal{K}_X\neq\emptyset$ (such $(X,\phi)$ can be constructed as in the proof of Theorem \ref{injectivity}) and let $\mathcal{K}_0$ be a chamber of $U$. 

Remark that $$U=\phi(K(T))\cap\left(\mathcal{C}_X^{\sigma_X}\setminus\bigcup_{\delta\in \Delta(X)}\delta^{\perp}\right)=\phi(K(T))\cap \left(\mathcal{C}_X\setminus\bigcup_{\delta\in \Delta(X)}\delta^{\perp}\right)$$ Indeed, the elements $\delta\in\Delta(X)$ such that $\delta^{\perp}\cap\phi(K(T))\neq\emptyset$ are by definition those in $\phi(\Delta'(K(T)))$ and $\mathcal{C}_X^{\sigma_X}\cap\phi(K(T)) = \mathcal{C}_X\cap\phi(K(T))$. This tells us that there is one chamber $C_0$ of  $\mathcal{C}_X\setminus\bigcup_{\delta\in \Delta(X)}\delta^{\perp}$, and in fact a unique one, such that $C_0\cap\phi(K(T))=\mathcal{K}_0$. By \cite[Proposition 5.14 ]{MarkmanTorelli} we know that there is $(Y,\eta)\in\mathcal{P}^{-1}(\pi)$ such that $\mathcal{K}_Y=\eta(\phi^{-1}(C_0))$ and, in particular, $\eta(K(T))\cap \mathcal{K}_Y\neq\emptyset$. Moreover, $\beta(Y,\eta)=\mathcal{K}_Y^{\sigma_Y}=\mathcal{K}_0$.
\end{proof}
\section{Examples and final remarks}
\subsection{A special example: $T=\langle 6 \rangle$} If $(X,\phi)$ is a point in $\cM^{\rho,\xi}_T$, then $\cC_X^{\sigma}=\cC_X\cap\phi(T)_{\IR}=\phi(T)_{\IR}$, where $\sigma$ denotes the non-symplectic automorphism of order three on $X$. In particular, this is a one-dimensional space
and we have also $\cK_X^{\sigma}=\phi(T)_\IR$, so that $\phi(K(T))=\phi(T)_{\IR}=\cK_X^{\sigma}$. This means that any point in $\cM^{\rho,\xi}_T$ is automatically $K(T)$-general, so that $\cM^{\rho,\xi}_T=\cM^{\rho,\xi}_{K(T)}$. On the other hand, here clearly $\Delta'=\emptyset$ so the period map:
$$
\mathcal{M}_T^{\rho,\xi}/\Mo^2(\rho,T)\ra (\Omega_T^{\rho,\xi}\setminus\Delta)/\Gamma_T^{\rho,\xi}
$$
is in fact bijective. 

This ball quotient has been already thoroughly studied by Allcock, Carlson and Toledo in \cite{Allcock-Carlson-Toledo}, where they study the moduli space of smooth cubic threefolds by associating to each such threefold $Y\subset \IP^4$ a triple cover of $\IP^4$ ramified exactly along $Y$. This construction gives exactly the family of smooth cubic fourfolds
\[
V_1:L_3(x_0,\dots,x_4)+x_5^3=0,\]
where $L_3$ defines a smooth cubic $Y$; we have shown in \cite[Example 6.4]{BCS} that the Fano varieties of lines $F(V_1)$ of cubics $V_1$ give a $10$-dimensional family of \ihsk admitting a non-symplectic automorphism of order three with invariant sublattice $T=\langle 6 \rangle$.

The hyperplane arrangement $\Delta$ coincides exactly with the union of the two arrangements $\mathcal{H}_{\Delta}$ and $\mathcal{H}_c$ corresponding respectively to discriminant and chordal cubics (compare with the definition given in \cite{Allcock-Carlson-Toledo} before Theorem 3.7).

We briefly explain here why: the hyperplanes in $\mathcal{H}_{\Delta} \cup \mathcal{H}_c$ are the orthogonal to the roots, of square $3$, of the Eisenstein lattice associated to $S$, and the quadratic form $q_S$ on $S$ is then recovered by the quadratic form $q_{\mathcal{E}}$ via $q_S(\delta)=-\frac{2}{3}q_{\mathcal{E}}(\delta)$. Thus, in $\IP(S\otimes\IC)$ they correspond exactly to hyperplanes orthogonal to the roots, of square $-2$, in $S$. On the other hand, we know, by results of Bayer, Hassett and Tschinkel \cite{BayerHassettTschinkel} and of Mongardi \cite{MongardiCone}, that $\delta\in\Delta(S)$ are elements in $S$ of square $-2$ or of square $-10$ and divisibility $\mathrm{div}\delta=2$, and these second ones cannot exist in a lattice $S$ of determinant $3$.

\subsection{Two different chambers}The theory of \S \ref{injectivityperiod} also offers an interesting explanation of \cite[Remark 7.7]{BCS}. In that case, $p=3$ and $T=U\oplus A_2^{\oplus 5}\oplus\langle -2\rangle$ and we were able to construct to families of examples, one of natural automorphisms on the Hilbert scheme of two points of a $K3$ surface and one on a family of Fano varieties of smooth cubic foufolds. The peculiarity of this example is that, for the first time, it gives an example of two families with automorphisms having the same action on cohomology but non-isomorphic fixed loci.

In this example, we obtain a four-dimensional complex ball quotient $\Omega$.
Theorem \ref{ball}, and his proof, shows that, for every $\pi\in\Omega$, there exists a marked pair $(\Sigma^{[2]},\phi)\in\mathcal{P}^{-1}(\pi)$, with $\Sigma$ a smooth $K3$ surface, with the natural automorphism of order three acting on it. 

On the other hand, also the family of Fano varieties of cubics of the form 
\[
V_2:L_3(x_0,x_1,x_2,x_3)+M_3(x_4,x_5)=0\]
gives an open dense subset of $\mathcal{M}_T^{\rho,\xi}$ of maximal dimension four.

Our interpretation of this phenomenon is that over the very general period point $\pi\in\Omega$, the fibre $\mathcal{P}^{-1}(\pi)$ will contain a natural marked pair and at least one marked pair coming from the Fano variety $F(V_2)$. Corollary \ref{algebraicModuli} then implies that there exists two chambers $K_1$ and $K_2$ of (\ref{decomp-C-T}) such that the general element in $\mathcal{M}_{K_1}^{\rho,\xi}$ is a Hilbert scheme of two points of a smooth $K3$ surface with a natural automorphism, while the general element in $\mathcal{M}_{K_2}^{\rho,\xi}$ is the Fano variety of a cubic in $V_2$.

\subsection{Rationality} If we restrict once more to the assumptions of Theorem \ref{ball}, namely if we suppose that $T=\bar{T}\oplus\langle -2\rangle$, with $\bar{T}\subset L_{K3}$, and we consider isomorphism classes of $K(T)$-general $(\rho,T)$-polarized \ihskcom Corollary \ref{algebraicModuli} gives an isomorphism $\mathcal{M}_{K(T)}^{\rho,\xi}/\Mo^2(T,\rho)\cong \Omega/\Gamma_{T}^{\rho,\xi}$.

On the other hand, the arithmetic group $\Gamma_{T}^{\rho,\xi}$ has index two inside the arithmetic group $\Gamma_{K3,\bar{T}}^{\rho,\xi}:=\lbrace g\in O(S)| g\circ \rho(\bar\sigma)=\rho(\bar\sigma)\circ g\rbrace\subset O(S)$. Hence,
$\Omega/\Gamma_{T}^{\rho,\xi}$ is generically a double cover of $\Omega/\Gamma_{K3,\bar{T}}^{\rho,\xi}$, and this is a Zariski-open in $\Omega_{K3}/\Gamma_{K3,\bar{T}}^{\rho,\xi}$, where $\Omega_{K3}:=\Omega_{T}^{\rho,\xi}\setminus\Delta$; it is classically known (see \cite{DK}) that $\Omega_{K3}/\Gamma_{K3,\bar{T}}^{\rho,\xi}$ parametrizes isomorphism classes of $(\rho,\bar{T})$-polarized $K3$ surfaces.

It follows that, even in the case of \ihskcom it is not possible to deduce any information about the rationality of $\mathcal{M}_{K(T)}^{\rho,\xi}/\Mo^2(T,\rho)$ from the work of Ma, Ohashi and Taki \cite{Ma-Ohashi-Taki} about the rationality of $\Omega_{K3}/\Gamma_{K3,\bar{T}}^{\rho,\xi}$ for $p=3$.

The rationality problem is still open for all prime orders $p>3$ also in the case of $K3$ surfaces.


\bibliographystyle{amsplain}
\bibliography{BibBallQuot}

\end{document}